\newtheorem{prop}{PROPOSITION}[section]
\newtheorem{lemma}{LEMMA}[section]
\newtheorem{cor}{COROLLARY}[section]
\newtheorem{thm}{THEOREM}[section]
\theoremstyle{assumptions}
\newtheorem{assmpn}{Assumption}[section]
\newtheorem{defn}{Definition}[section]
\DeclareMathOperator{\Var}{Var}
\DeclareMathOperator{\Cov}{Cov}
\begin{document}

\numberwithin{equation}{section}

\title{Space-Time Current Process for Independent Random Walks in One Dimension}
 
\author{Rohini Kumar\\
 Department of Mathematics\\
 University of Wisconsin\\
 Madison, WI 53706\\
 {\tt kumar@math.wisc.edu}}
 
\maketitle

 \begin{abstract}
In a system made up of independent random walks, fluctuations of order $n^{1/4}$ from the hydrodynamic limit come from particle current across characteristics. We show that a two-parameter space-time particle current process converges to a two-parameter Gaussian process. These Gaussian processes also appear as the limit for the one-dimensional random average process. The final section of this paper looks at large deviations of the current process.
 
 \end{abstract}

\bigskip
 
{\bf Key words.} Independent random walks, hydrodynamic limit, fluctuations, large deviation.

{\bf AMS subject classifications.} Primary 60K35, 60F10; secondary 60F17, 60G15.

\bigskip

\DeclareGraphicsExtensions{.eps,.pdf,.jpg} 
\def\tr{(n^{-\beta}[n^\beta t],n^{-\beta}[n^\beta r])} 
\numberwithin{equation}{section} 
\def\i{$[n^{1-\beta}k_1,n^{1-\beta}(k_1+1)]$} 
 
\def\t{$[n^{1-\beta}k_1,n^{1-\beta}(k_1+1)]$} 
\def\I{$([ny]+[n^{-\beta}k_2\sqrt{n}],[ny]+[r\sqrt{n}]]$} 
\def\J{$([r_0\sqrt{n}]+[nvt_0],[r\sqrt{n}]+[nvt]]$} 
\def\II{$\left[[r_0\sqrt{n}\ ],[(r_0+n^{-\beta})\sqrt{n}\ ]\right]$} 
\def\JJ{\left[[r_0\sqrt{n}\ ]+[nvt_0],[(r_0+n^{-\beta})\sqrt{n}\ ]+[nv(t_0+n^{-\beta})]\right]} 
\def\r{$([ny]+[n^{-\beta}[n^\beta r]\sqrt{n}],[ny]+[r\sqrt{n}]]$} 
\def\rt{$([ny]+[n^{-\beta}[n^\beta r]\sqrt{n}]+[n^{1-\beta}v[n^\beta t],[ny]+[r\sqrt{n}]+[nvt]]$} 
\def\tt{$[n^{1-\beta}[n^\beta t],nt]$} 
\bigskip

\section{Introduction} It is well known that particle systems that appear
different at the microscopic level often behave almost identically at a macroscopic
level. This has been observed in the hydrodynamic limits and fluctuation
results of several particle models. Consequently, there is much to be gained
in studying the behavior of simpler stochastic particle systems in the hope
that at the macroscopic level they will reflect the behavior of a universal
class of systems. While the hydrodynamic limit of several models have been
studied, fluctuation results have proved elusive for many systems. In this
paper we consider particle current fluctuations in the one dimensional
independent random walk model.

The hydrodynamic limit for particle distribution in 
typical asymmetric systems are solutions to p.d.e's of the
form 
\begin{equation}
\label{pde}
 \partial_t u+\partial_x f (u)=0. \end{equation}
 In the case of nearest neighbor Totally Asymmetric Simple Exclusion Process (TASEP) in one
dimension, the flux function $f (\rho) = \rho (1 - \rho)$. For non-interacting particle systems
$f (\rho) = v \cdot \rho$ where $v$ is the average velocity of the particles. Thus the relevant p.d.e. for a system of independent asymmetric random walks is 
 \begin{equation}\label{transport_eqn}\partial_tu+v\cdot \partial_xu=0\end{equation} (Prop 3.1, page 15 in \cite{KL}).

 From the transport equation (\ref{transport_eqn}) we see that in the independent random walk model, the initial density profile shifts with velocity $v$. Consider an observer moving at constant velocity $v$. The path of the observer is a characteristic line of the transport equation (\ref{transport_eqn}). It is natural to expect the net current of particles across the path of the observer to be zero. But what are the fluctuations in this particle current? This is the question we address in this paper. It has been observed that these current fluctuations are of order $n^{1/4}$ \cite{SEPP}.  Here, $n$ is the scaling parameter. Typically we scale both space and time  by $n$ in asymmetric models, this is called Euler scaling. In  symmetric models we use diffusive scaling i.e. we scale time by $n$ and space by $\sqrt{n}$. There is a general belief that when $f$ is linear (i.e. $f''\equiv  0$) in (\ref{pde}), the fluctuations in particle current across characteristics of (\ref{pde}) should be of order $n^{1 / 4}$. This has been shown for
the random average process (RAP) and for the one dimensional 
independent random
walk model where $f'' \equiv 0$. 

In this paper we study both the fluctuations and the large 
deviations of the   current process for independent
walks.  For the fluctuations we consider the 
current process  indexed both by 
time and spatial shifts of order
 $\sqrt{n}$  of characteristic lines.  The  $\sqrt{n}$ order
for spatial scaling is the natural one because the individual
random walks fluctuate on that scale.  
We extend the distributional limit of 
 \cite{SEPP} to a process limit for the space-time current process. 
The space-time current process was also studied for RAP 
in \cite{BRS} but only convergence of  finite dimensional 
distributions was shown without process-level tightness. 
The same family of Gaussian processes arises as limits 
for both  RAP and  independent random walks. 

It is interesting to note that there are models which are not asymmetric yet
exhibit subdiffusive current fluctuations with Gaussian scaling limits. It was
conjectured (conjecture 6.5 in \cite{Spohn}) that
subdiffusive fluctuations in 1 dimensional nearest neighbor symmetric simple  exclusion processes (SSEP) converge to
fBM with Hurst parameter $1 / 4$. It was subsequently proved in the finite
dimensional distributions sense and has recently been proved in the full
functional central limit theorem sense in \cite{Sethuraman}. This
says that the universality class of
current fluctuations of order $n^{1 / 4}$  contains both symmetric and asymmetric processes. However, the symmetric and asymmetric processes differ on the level of hydrodynamics. 

This paper is organized as follows. We start with a description of
the independent random walk model and the statements of the main results in section \ref{sec:model}.
The next three sections \ref{sec:finitedim}, \ref{sec:tightness} and  \ref{sec:largedev} cover the proofs. Section \ref{sec:finitedim} gives the convergence of finite dimensional distributions and section \ref{sec:tightness} proves process level tightness. A note on the tightness methods used here: since we are
interested in a two-parameter process, the standard theorems on convergence in
$D_{\mathbb R}[0,\infty)$ and $C_{\mathbb R}[0,\infty)$ spaces do not apply. We appeal to two papers,
\cite{BW} and \cite{Durr}, that provide suitable criteria for deducing tightness.
\cite{BW} gives the context in which we speak of convergence for
two-parameter processes and a tightness criterion. The proof of Proposition 5.7 in \cite{Durr} is extended to two dimensions to prove the tightness criterion. The last section 
contains proofs of some large deviation results for the current process.

\section{Model and results}
\label{sec:model}
\subsection{Independent random walk model}
Consider particles distributed over the one dimensional integer lattice which evolve like independent continuous-time random walks. We are given the initial occupation variables $\eta_0=\{\eta_0(x):x\in\mathbb Z\}$ defined on some probability space. Let $X_{m,j}(t)$ denote the position at time $t$ of the $j$th random walk 
starting at site $m$. 
The common jump rates of the random walks are given by a probability kernel $\{p(x):x\in\mathbb Z\}$. Once the initial positions of the random walks are specified, their subsequent evolutions $\{X_{m,j}(t)-X_{m,j}(0):m\in\mathbb Z, j=1,\hdots, \eta_0(m)\}$ are as i.i.d. random walks starting at the origin, on the same probability space, independent of $\eta_0$. Define \[\eta_t(x):=\sum_{m\in\mathbb Z}\sum_{j=1}^{\eta_0(m)}{\bf 1}\{X_{m,j}(t)=x\}\] to be number of particles on site $x$ at time $t$. 

\begin{assmpn}
\label{assmpn1}
For the random walk kernel, we assume that, for some $\delta>0$,
\begin{equation}  \sum_{x\in\mathbb Z}e^{\theta x}p(x)<\infty \text{ for }|\theta|\leq\delta 
\end{equation} 
(This assumption will enable us to calculate large deviation bounds for the random walks.)
\end{assmpn}
Throughout this paper, we assume that $\mathbb N$ denotes the set of positive integers.
Let $\{\eta_0^n: n\in \mathbb N\}$ be a sequence of initial occupation variables defined on some probability space.
\begin{assmpn}
\label{assmpn3}
 For each n, the initial occupation variables $\{\eta_0^n(x):x\in\mathbb Z\}$ are independent. They have  a uniformly bounded twelfth moment:\begin{equation}
\label{momentassmpn}\sup_{n\in N,x\in\mathbb Z}E[\eta_0^n(x)^{12}]<\infty.\end{equation}
Let $\rho_0^n(x)=E\eta_0^n(x) \textrm{ and } v_0^n(x)=Var[\eta_0^n(x)]$
 be the mean and variance resp. of the initial occupation variable $\eta_0^n(x), x\in\mathbb Z$. Let $\rho_0$ and $v_0$ be two given nonnegative, finite numbers.  The means $\rho_0^n$ and variances $v_0^n$ approximate $\rho_0$ and $v_0$ in the following sense:
   There exist positive integers $L=L(n)$ such that $n^{-1/4}L(n)\to 0$ and for any finite constant A,
\begin{equation}\label{initial_cond}
\lim_{n\to\infty} \sup_{|m|\leq A\sqrt{n\log n}}n^{1/4}\Bigl|\frac{1}{L(n)}\sum_{j=1}^{L(n)}\rho_0^n(m+j)-\rho_0\Bigr|=0\end{equation} \\
 The same assumption holds when $\rho_0^n$ and $\rho_0$ are replaced by $v_0^n$ and $v_0$. 
 \end{assmpn}
 As in \cite{SEPP}, the reason for the complicated assumption (\ref{initial_cond}) is to accommodate both random and deterministic initial conditions. For random $\eta_0^n(x)$ we could take $\rho_0^n(x)=\rho_0(\frac{x}{n})$ for some sufficiently regular function $\rho_0(\cdot)$. However, for deterministic $\eta_0^n(x)$ we cannot do this unless $\rho_0(x)$ is integer-valued. A couple of examples illustrating random and deterministic initial configurations that satisfy assumptions \ref{assmpn1} and \ref{assmpn3} can be found in \cite{SEPP}. 

Let $$v=\sum_xxp(x)\textrm{ and }\kappa_2=\sum_xx^2p(x).$$
 
The characteristics of (\ref{transport_eqn}) are straight lines with slope $v$. Fix $T>0$ and $S>0$. For $t\in[0,T]$ and $r\in[-S,S]$,
we let $Y_n(t,r)$ denote the net right-to-left particle 
current  during time $[0,nt]$ across the characteristic line
 starting at $([r\sqrt{n}],0)$.  
More precisely, 
\begin{equation}
\label{Y2}
\begin{split}
Y_n(t,r)&:= 
\sum_{m=-\infty}^\infty\sum_{j=1}^{\eta_0^n(m)}
\bigl[\mathbf{1} \{X_{m,j}(nt)\leq [nvt]+[r\sqrt{n}]\}
\mathbf{1} \{m>[r\sqrt{n}]\}\\
&\qquad\qquad -\mathbf{1} \{X_{m,j}(nt)>[nvt]+[r\sqrt{n}]\}
\mathbf{1} \{m\leq [r\sqrt{n}]\}\bigr]
\end{split}
\end{equation}
where $X_{k,j}(\cdot)$ is the $j$th random walk that starts at site $k$. Note that the random walks denoted as $X_{k,j}$ in the definition of $Y_n(t,r)$ should actually be $X^n_{k,j}$, but we drop the superscript $n$ for notational simplicity. 

\subsection{Distributional limit}
We give a brief description of the path space of the process $Y_n(\cdot,\cdot)$. 
Let $D_2=D_2([0,T]\times [-S,S],\mathbb R)$ be the space of 2-parameter cadlag functions with Skorohod's topology. Let $Q:=[0,T]\times[-S,S]$. For any $(t,r)\in Q$, we can divide $Q$ into four quadrants: \[Q^1_{(t,r)}:=\{(s,q)\in Q:s\geq t, q\geq r\},\ Q^2_{(t,r)}:=\{(s,q)\in Q:s\geq t, q< r\},\] \[ Q^3_{(t,r)}:=\{(s,q)\in Q:s< t, q< r\},\ Q^4_{(t,r)}:=\{(s,q)\in Q:s< t, q\geq r\}.\]
Then the precise definition of $D_2$ is 
\begin{align*}D_2=\{&f:Q \to \mathbb R\text{ such that for any point }(t,r)\in Q, 
 \lim_{\substack{(s,q)\in Q^i_{(t,r)}\\(s,q)\to (t,r)}}f(s,q)\\&\text{ exists for }i=1,2,3,4\text{ and }\lim_{\substack{(s,q)\in Q^1_{(t,r)}\\(s,q)\to (t,r)}}f(s,q)=f(t,r) \}.\end{align*}
 In other words,
 $D_2$ contains functions that are continuous from the right and above with limits from the left and below. Skorohod's topology in $D_{\mathbb R}[0,\infty)$ is extended to this space in the most natural way.
The space of multiparameter cadlag functions and their topology is described in detail in \cite{BW}. 
By Theorem 2 in \cite{BW}, a sufficient criterion for the weak convergence  $X_n \to X$ in $D_2$ is,
\begin{enumerate}
\item
For all finite subsets $\{(t_i,r_i)\}\subset [0,T]\times [-S,S]$,   
\[(X_n(t_1,r_1),\cdots,X_n(t_N,r_N)) \to 
(X(t_1,r_1),\cdots,X(t_N,r_N)) \]
weakly,  and
\item
$\lim_{\delta\to 0}\limsup_n P\{w_{X_n}(\delta)\geq \epsilon\}=0$ for all $\epsilon>0$,
where the modulus of continuity is defined by
\[w_x(\delta)=\sup_{\substack{(s,q),(t,r)\in[0,T]\times [-S,S]\\|(s,q)-(t,r)|<\delta}}|x(s,q)-x(t,r)|.\]
\end{enumerate}
Clearly, $\{Y_n(t,r):t\in[0,T], r\in [-S,S]\}$ are $D_2$-valued processes and we can use the above criterion  to prove their convergence in the weak sense.

Denote the centered Gaussian density and distribution
 with variance $\sigma^2$
 by 
$$\phi_{\sigma^2}(x)=\frac{1}{\sqrt{2\pi\sigma^2}}\exp\{-\frac{1}{2\sigma^2}x^2\}
\ \text{ and }\ 
\Phi_{\sigma^2}(x)=\int_{-\infty}^x\phi_{\sigma^2}(y)dy.
$$
Define also
 $$\Psi_{\sigma^2}(x)=\sigma^2\phi_{\sigma^2}(x)-
x(1-\Phi_{\sigma^2}(x)).$$  
For $(s,q),(t,r) \in [0,T]\times [-S,S]$, define two covariances by
\begin{equation}
\label{gamma0}
\Gamma_0((s,q),(t,r))=\Psi_{\kappa_2s}(|q-r|)+\Psi_{\kappa_2t}(|q-r|)-\Psi_{\kappa_2(t+s)}(|q-r|)
\end{equation}
and
\begin{equation}
\label{gammaq}
\Gamma_q((s,q),(t,r))=\Psi_{\kappa_2(t+s)}(|q-r|)-\Psi_{\kappa_2|t-s|}(|q-r|).
\end{equation}

\begin{thm}
\label{main_thm}
Define $Y_n(t,r)$ as in (\ref{Y2}). Then under Assumptions \ref{assmpn1} and \ref{assmpn3}, as $n\to\infty$, the process $n^{-1/4}Y_n(\cdot,\cdot)$ converges weakly on the space $D_2$ to the mean zero Gaussian process $Z(\cdot,\cdot)$ with covariance 
\begin{equation}
\label{cov}
EZ(s,q)Z(t,r)=\rho_0\Gamma_q((s,q),(t,r))+v_0\Gamma_0((s,q),(t,r)).
\end{equation}
\end{thm}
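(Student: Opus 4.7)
The plan is to verify the two conditions from the BW criterion stated in the excerpt: convergence of finite-dimensional distributions and tightness of $\{n^{-1/4}Y_n\}$ in $D_2$. The proof naturally separates these, matching the division into sections \ref{sec:finitedim} and \ref{sec:tightness}.

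For finite-dimensional distributions, I would first write $Y_n(t,r)$ as a sum over particles, grouping by starting site: $Y_n(t,r)=\sum_m\sum_{j=1}^{\eta_0^n(m)}\xi^n_{m,j}(t,r)$, where $\xi^n_{m,j}(t,r)$ is the signed indicator of the displacement $X_{m,j}(nt)-m$ crossing the characteristic. Because the walks are i.i.d.\ given $\eta_0^n$, and the $\{\eta_0^n(m)\}$ are independent across sites, I would decompose
\[
n^{-1/4}Y_n(t,r)=n^{-1/4}A_n(t,r)+n^{-1/4}B_n(t,r),
\]
where $A_n$ is the sum centered by its conditional expectation given $\eta_0^n$ (the "walk fluctuation" piece, contributing $\rho_0\,\Gamma_q$) and $B_n$ is the centered sum of conditional expectations over sites (the "initial fluctuation" piece, contributing $v_0\,\Gamma_0$). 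Conditional on $\eta_0^n$, $A_n$ is a sum of independent mean-zero bounded summands of order one, and $B_n$ is a sum of independent centered random variables with $\text{Var}=v_0^n(m)\cdot(\text{crossing probability})^2$. A Lindeberg-Feller CLT applied jointly (via Cramér-Wold) would then yield joint Gaussian limits. The covariance computation requires the local CLT for the random walk, which under Assumption \ref{assmpn1} gives that $P\{X_{m,j}(nt)\leq [nvt]+[r\sqrt n]\}\approx\Phi_{\kappa_2 t}\!\bigl((r\sqrt n-(m-[nvt]+[nvt]))/\sqrt n\bigr)$ uniformly in the relevant range of $m$; Riemann-sum approximation, together with Assumption \ref{assmpn3}, replaces discrete sums $\sum_m \rho_0^n(m)(\cdots)$ and $\sum_m v_0^n(m)(\cdots)$ by integrals in $\rho_0$ and $v_0$. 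Evaluating these Gaussian integrals for a pair of crossings across the lines based at $(q,s)$ and $(r,t)$ produces precisely $\Psi_{\kappa_2 s},\Psi_{\kappa_2 t},\Psi_{\kappa_2(t+s)},\Psi_{\kappa_2|t-s|}$, assembling into $\Gamma_q$ and $\Gamma_0$.

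For tightness I would invoke the second condition of the BW criterion, obtaining the estimate on the modulus of continuity by a moment bound of the form
\[
E\bigl|n^{-1/4}Y_n(t,r)-n^{-1/4}Y_n(s,q)\bigr|^{p}\leq C\bigl(|t-s|+|r-q|\bigr)^{1+\alpha}
\]
for some $p\geq 4$ and $\alpha>0$, uniformly in $n$. The uniform twelfth-moment assumption (\ref{momentassmpn}) is exactly what lets one expand the $p$-th power of the sum over particles, bound the resulting mixed moments using independence of sites and of walks, and absorb the combinatorics; large-deviation bounds from Assumption \ref{assmpn1} control walks whose starting sites lie far outside a window of size $\sqrt{n\log n}$. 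Once such a modulus bound holds, a two-dimensional extension of \cite{Durr}'s Proposition 5.7, as indicated in the introduction, upgrades finite-dimensional convergence to convergence in $D_2$.

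The main obstacle is the tightness step rather than the finite-dimensional convergence. Obtaining a quantitative moment bound jointly in $t$ and $r$ is delicate because a small change in $r$ shifts the characteristic line, affecting contributions of many particles simultaneously, while a small change in $t$ perturbs each walk slightly; one has to handle both increments with a single unified estimate and then push it through a 2D version of Durr's criterion. A secondary technical difficulty is ensuring that the local CLT approximations used in the covariance calculation are uniform enough in $m$ to survive Riemann summation against the measures built from $\rho_0^n,v_0^n$, using Assumption \ref{assmpn3}; this is what forces the $\sup_{|m|\le A\sqrt{n\log n}}$ form of (\ref{initial_cond}) and the pairing with large-deviation tail control of the walks.
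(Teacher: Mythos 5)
Your finite-dimensional-distribution plan is essentially equivalent to the paper's: you center by conditional expectation given $\eta_0^n$ and split $Y_n-EY_n$ into a ``walk fluctuation'' and ``initial fluctuation'' piece, while the paper keeps each site's contribution together in a single summand $U_m(t,r)$ and uses the random-sum variance/covariance identity to compute $E[U_m(s,q)U_m(t,r)]$, splitting the sum only into a near part $|m|\le r(n)\sqrt n$ (handled by Lindeberg--Feller) and a far part (shown to vanish in $L^2$). Both routes yield the $\rho_0\Gamma_q+v_0\Gamma_0$ covariance, and both rely on the same CLT/Riemann-sum argument for the walk crossing probabilities.

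The tightness step is where the proposal has genuine gaps. First, the moment bound you write,
\begin{equation*}
E\bigl|n^{-1/4}Y_n(t,r)-n^{-1/4}Y_n(s,q)\bigr|^{p}\leq C\bigl(|t-s|+|r-q|\bigr)^{1+\alpha}\quad\text{uniformly in }n,
\end{equation*}
cannot hold for all $(s,q),(t,r)$: $Y_n$ is a step process, so when $|t-s|$ and $|r-q|$ shrink below the mesh of the jumps the increment does not shrink---there is a fixed-order probability that a single jump lands between the two points, giving $n^{-1/4}|Y_n(t,r)-Y_n(s,q)|\gtrsim n^{-1/4}$ for arbitrarily close points, which defeats any power bound with exponent exceeding zero. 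This is exactly the difficulty that Durrett's criterion (and the paper's Proposition~\ref{criterion}) is designed to handle: the moment bound \eqref{momentbound} is only required for $|(s,q)-(t,r)|>\delta_n=n^{-\beta}$, and a separate, qualitatively different argument \eqref{tightmodofcont} controls the modulus over scale $\delta_n$. Your proposal contains nothing corresponding to that second condition; the paper fills it with Lemma~\ref{tight-mod}, which decomposes the fine-scale fluctuations into (i) particles that cross by jumping (controlled by Poisson tail bounds on the number of jumps among the $O(n^{1/2+\alpha})$ nearby particles in a time window of length $n^{1-\beta}$) and (ii) particles the characteristic line sweeps past (controlled by moment bounds on occupation numbers at two sites). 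That counting argument is the technical heart of tightness and cannot be replaced by a uniform moment estimate.

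Second, the exponent you allow is too weak. The parameter set $[0,T]\times[-S,S]$ is two-dimensional, so the right-hand side must decay like $|(s,q)-(t,r)|^{\sigma}$ with $\sigma>2$, not merely $1+\alpha$ with $\alpha>0$. Since the limiting increment variance is of order $|t-s|^{1/2}$, a $p$-th moment of a Gaussian increment scales as $|t-s|^{p/4}$, and $p/4>2$ forces $p>8$. This is why the paper assumes a uniform twelfth moment \eqref{momentassmpn} and takes $p=12$; $p\ge 4$ does not suffice. Also worth noting: since the BW criterion applies to the uncentered process $n^{-1/4}Y_n$ while the moment bounds are most naturally proved for $\bar Y_n=Y_n-EY_n$, you need the uniform vanishing of $n^{-1/4}EY_n(t,r)$ (Lemma~\ref{meanvanish}) to conclude; this step is implicit in your write-up but should be made explicit.
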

{\bf Note:} We will show later that $n^{-1/4}EY_n(t,r)\to 0$ as $n\to\infty$ uniformly for $t\in[0,T]$ and $r\in[-S,S]$. Hence, in the above theorem we can replace $n^{-1/4}Y_n$ with the centered current process with impunity.
\begin{cor}
Under the invariant distribution where $\{\eta_t^n(x):x\in\mathbb Z\}$ are i.i.d.\ Poisson with mean $\rho$ for all $n$, $n^{-1/4}Y_n(\cdot,\cdot)$ converges weakly in $D_2$ to a mean-zero Gaussian process $Z(\cdot,\cdot)$ with covariance
\[ EZ(s,q)Z(t,r)=\rho\bigl(\Psi_{\kappa_2s}(|q-r|)+\Psi_{\kappa_2t}(|q-r|)-\Psi_{\kappa_2|t-s|}(|q-r|)\bigr).\]
In particular, for a fixed $r$ the process $\{Z(t,r):t\in [0,T]\}$ has covariance
\[EZ(s,r)Z(t,r)=\rho\sqrt{\frac{\kappa_2}{2\pi}}\bigl(\sqrt{s}+\sqrt{t}-\sqrt{|t-s|}\bigr),\]
i.e., process $Z(\cdot,r)$ is fractional Brownian motion with Hurst parameter $1/4$.
\end{cor}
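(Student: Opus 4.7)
The plan is to deduce the corollary directly from Theorem \ref{main_thm} specialized to the i.i.d.\ Poisson initial configuration. First I would verify that Assumptions \ref{assmpn1} and \ref{assmpn3} are met. Assumption \ref{assmpn1} is a condition on the jump kernel $p(\cdot)$ alone and is assumed throughout. For Assumption \ref{assmpn3}, when $\eta_0^n(x)$ are i.i.d.\ Poisson$(\rho)$, both $\rho_0^n(x)$ and $v_0^n(x)$ are identically equal to $\rho$ for every $n$ and every $x$, and all moments of a Poisson variable are finite, so (\ref{momentassmpn}) holds. Consequently the averages in (\ref{initial_cond}) are identically $\rho$ for any choice of $L(n)$, so the assumption holds with $\rho_0=v_0=\rho$. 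It is also classical that the product Poisson$(\rho)$ measure is invariant under the independent-walk dynamics, which justifies the corollary's reference to \textit{the} invariant distribution regardless of the time origin used to start the current process.

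Next I would substitute $\rho_0=v_0=\rho$ into the covariance formula (\ref{cov}) and simplify using (\ref{gamma0}) and (\ref{gammaq}). Adding the two $\Gamma$'s, the terms $\pm\Psi_{\kappa_2(s+t)}(|q-r|)$ cancel, giving
\[
\Gamma_0((s,q),(t,r))+\Gamma_q((s,q),(t,r))=\Psi_{\kappa_2 s}(|q-r|)+\Psi_{\kappa_2 t}(|q-r|)-\Psi_{\kappa_2|t-s|}(|q-r|).
\]
Multiplying by $\rho$ yields the first covariance claimed in the corollary.

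Finally, for the one-parameter statement I would set $q=r$ and evaluate $\Psi_{\sigma^2}(0)$. From the definition $\Psi_{\sigma^2}(x)=\sigma^2\phi_{\sigma^2}(x)-x(1-\Phi_{\sigma^2}(x))$, the linear term vanishes at $x=0$, and $\sigma^2\phi_{\sigma^2}(0)=\sigma^2/\sqrt{2\pi\sigma^2}=\sigma/\sqrt{2\pi}$. Taking $\sigma^2=\kappa_2 s$, $\kappa_2 t$, $\kappa_2|t-s|$ in turn produces
\[
EZ(s,r)Z(t,r)=\rho\sqrt{\kappa_2/(2\pi)}\bigl(\sqrt{s}+\sqrt{t}-\sqrt{|t-s|}\bigr),
\]
exactly the fBM covariance $\tfrac{c}{2}(s^{2H}+t^{2H}-|t-s|^{2H})$ with $H=1/4$, identifying $Z(\cdot,r)$ as a scaled fractional Brownian motion of Hurst parameter $1/4$. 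No step involves more than a direct substitution, an algebraic cancellation, and an evaluation of $\Psi_{\sigma^2}$ at the origin, so there is no real obstacle; the corollary is essentially bookkeeping on top of Theorem \ref{main_thm}.
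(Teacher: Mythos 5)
Your proposal is correct and is essentially the only possible proof: the paper offers no separate argument for the corollary because it is a direct specialization of Theorem \ref{main_thm} with $\rho_0=v_0=\rho$ (justified by checking Assumptions \ref{assmpn1}–\ref{assmpn3} for i.i.d.\ Poisson), followed by the cancellation of the $\Psi_{\kappa_2(s+t)}$ terms in $\Gamma_0+\Gamma_q$, and the evaluation $\Psi_{\sigma^2}(0)=\sigma/\sqrt{2\pi}$ for the one-parameter marginal. Your bookkeeping, including the remark on invariance of the product Poisson measure, matches the paper's intent exactly.
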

The covariance (\ref{cov}) is the same as the covariance of the limiting Gaussian process in the RAP model found in \cite{BRS}, with different coefficients in front of $\Gamma_q$ and $\Gamma_0$. As in \cite{BRS}, we can represent the Gaussian process $Z(\cdot,\cdot)$ as a stochastic integral:  
\begin{equation}
\label{sde}
\begin{split}
Z(t,r)=\sqrt{\kappa_2\rho_0}&\int_{[0,t]\times\mathbb R}\phi_{\kappa_2(t-s)}(r-z)dW(s,z)\\&
+\sqrt{v_0}\int_{\mathbb R}sgn(x-r)\Phi_{\kappa_2t}(-|x-r|)dB(x).
\end{split}
\end{equation}
The equality in (\ref{sde}) is equality in distribution. $W$ is a two-parameter Brownian motion defined on $\mathbb R_+\times\mathbb R$, $B$ is a one-parameter Brownian motion defined on $\mathbb R$, and $W$ and $B$ are independent of each other. The stochastic integral clearly delineates the two sources of fluctuations in the current. The first integral represents the space-time noise created by the dynamics, and the second integral represents the initial noise propagated by the evolution.

\subsection{Large deviation results}
\def\P{\Phi_{\kappa_2t}}
\def\Z{\P(y)+e^{\alpha}(1-\P(y)) }
\def\z{e^{-\alpha}\P(y)+1-\P(y)}
We first state large deviation results for $Y_n(t,r)$ with fixed $r\in\mathbb R$ and $t>0$.
\begin{assmpn}\label{assmpnld}
Assume the initial occupation variables 
$\{\eta_0^n(m):m\in\mathbb{Z}\}$ are i.i.d. 
Let 
\[\gamma(\theta)=\log Ee^{\theta\eta_0^n(m)}\]
with effective domain 
\[ D_\gamma:=\{\alpha\in \mathbb R:\gamma(\alpha)<\infty\}.\] 
Assume
$D_\gamma=\mathbb R$.
\end{assmpn}
For $\lambda\in\mathbb R$, define
\[
Z_\lambda(y):=
\begin{cases}
\log\{\Phi_{\kappa_2t}(y)+e^\lambda(1-\Phi_{\kappa_2t}(y))\}&\text{ for } y> 0\\
\log\{e^{-\lambda}\Phi_{\kappa_2t}(y)+1-\Phi_{\kappa_2t}(y)\}&\text{ for }y\leq 0 
\end{cases}
\]

and
 \[\Lambda(\lambda):=\int_{-\infty}^\infty\gamma(Z_\lambda(y))dy\]
with effective domain 
\[
D_\Lambda:=\{\alpha\in \mathbb R:\Lambda(\alpha)<\infty\}.
\]
$\Lambda$  turns out to be the limiting moment generating 
function of the current.

Recall
\begin{defn}
A convex function $\Lambda :\mathbb R\to(-\infty,\infty]$ is essentially smooth if:\\
a) $D_\Lambda^o$(interior of $D_\Lambda$) is non-empty.\\
b) $\Lambda(\cdot)$ is differentiable throughout $D_\Lambda^o$.\\
c) $\Lambda(\cdot)$ is steep, i.e., $\lim_{n\to\infty}|\nabla\Lambda(\lambda_n)|=\infty$ whenever $\{\lambda_n\}$ is a sequence in $D_\Lambda^o$ converging to a boundary point of $D_\Lambda^o$.
\end{defn}

Let $\gamma^*(x):=\sup_{\lambda\in \mathbb R}\{\lambda\cdot x-\gamma(\lambda)\}$ be the convex dual of $\gamma(\cdot)$.
Let \[I(x):=\sup_{\lambda\in \mathbb R}\{\lambda\cdot x-\Lambda(\lambda)\}.\]
Recall the usual definition of Large Deviation Principle (LDP). The sequence of random variables $\{X_n\}$ satisfies the LDP with rate function $J(x)$ and normalization $\{\sqrt{n}\}$ if the following are satisfied:
\begin{enumerate}
  \item For any closed set $F\subset \mathbb R$,\[\limsup_{n\to\infty}\frac{1}{\sqrt{n}}\log P\{X_n\in F\}\leq -\inf_{x\in F}J(x)\]
  \item For any open set $G\subset \mathbb R$, \[\liminf_{n\to\infty}\frac{1}{\sqrt{n}}\log P\{X_n\in G\}\geq -\inf_{x\in G}J(x)\]
 \end{enumerate}
The rate function $J$ is said to be a good rate function if its level sets are compact.

\def\a{{\alpha(x)}}
Let $Br_p(\lambda):=\log E e^{\lambda X}$, where $X\sim$Bernoulli($p$), be the logarithmic moment generating function of Bernoulli random variables. Its convex dual is 
\[Br_p^*(x)=x\log\frac{x}{p}+(1-x)\log\frac{1-x}{1-p}\] 
for $0\le x\le 1$.
Define $\alpha(x)$ implicitly by  
\begin{equation}
\label{alpha(x)}
x=\Lambda'(\a).
\end{equation}
This definition is well defined for all $x\in\mathbb R$ since $\Lambda(\cdot)$ is strictly convex.
For any $\alpha\in\mathbb R$, define \begin{equation}\label{F_x}F_\alpha(y):=\frac{e^{-\alpha}\P(y)}{1-\P(y)+e^{-\alpha}\P(y)} \text{ for }y\in\mathbb R.\end{equation}
By \eqref{alpha(x)} we have
\begin{equation}
\label{current_x}
x=\int_0^\infty\gamma' (Z_{\alpha(x)}(y) )(1-F_{\alpha(x)}(y))dy-\int_{-\infty}^0\gamma' (Z_{\alpha(x)}(y) )F_{\alpha(x)}(y)dy.
\end{equation}

Define \begin{equation}\label{I_1}I_1(x):= \int_{-\infty}^\infty\gamma^*\left\{\gamma'(Z_\a(y))\right\}dy
\end{equation}

and
\begin{equation}
\label{I_2}I_2(x):=\int_{-\infty}^\infty \gamma'(Z_\a(y))Br^*_{\P(y)}(F_\a(y))dy. \end{equation}
Recall that we assumed $\eta_0^n(\cdot)$ are i.i.d. at the beginning of this section. Consequently, the  underlying distribution is shift invariant. The rate function in Theorem 2.2 therefore does not depend on $r$, as  the marginals of the current process, $Y_n(t,r)$, are shift invariant. 
\begin{thm}
\label{ld}
Let Assumptions \ref{assmpn1} and \ref{assmpnld} hold.
For fixed real $r$ and $t>0$, $n^{-1/2}Y_n(t,r)$ satisfies the large deviation principle with normalization $\{\sqrt{n}\}$ and  
 good, strictly convex  rate function
\begin{equation}
\begin{split}
\label{RateFn}
I(x)=I_1(x)+I_2(x),\quad x\in\mathbb R.  
\end{split}
\end{equation}
\end{thm}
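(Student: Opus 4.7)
The plan is to apply the Gärtner--Ellis theorem with the candidate limiting logarithmic moment generating function $\Lambda$ as defined in the excerpt. First I split $Y_n(t,r)$ as a sum over starting sites $m$: each particle at a site $m>[r\sqrt n]$ independently contributes $+1$ with probability $p_m:=P(X(nt)\leq [nvt]+[r\sqrt n]-m)$, and each particle at $m\leq[r\sqrt n]$ contributes $-1$ with probability $q_m:=1-p_m$, where $X(\cdot)$ is a random walk with kernel $p(\cdot)$ started at the origin. By independence across sites and the i.i.d.\ assumption on $\eta_0^n$ (Assumption~\ref{assmpnld}), integrating out the occupation variable at each site via $\gamma(\cdot)$ gives the exact product formula
\begin{equation*}
\log E\bigl[e^{\lambda Y_n(t,r)}\bigr]=\sum_{m>[r\sqrt n]}\gamma\bigl(\log(1-p_m+e^\lambda p_m)\bigr)+\sum_{m\leq[r\sqrt n]}\gamma\bigl(\log(1-q_m+e^{-\lambda}q_m)\bigr).
\end{equation*}

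The central analytic step is to show that $n^{-1/2}\log E[e^{\lambda Y_n(t,r)}]\to\Lambda(\lambda)$ for every $\lambda\in\mathbb R$. Under the change of variable $y=(m-[r\sqrt n])/\sqrt n$ the sum has spacing $n^{-1/2}$, so it is a Riemann sum. Using the CLT for $X(nt)$, sharpened by Assumption~\ref{assmpn1} to supply uniform moderate-deviation error bounds, one replaces $p_m$ by $1-\Phi_{\kappa_2 t}(y)$ and $q_m$ by $\Phi_{\kappa_2 t}(y)$, so the summand becomes $\gamma(Z_\lambda(y))$. Passage to $\int_{-\infty}^\infty\gamma(Z_\lambda(y))\,dy$ then requires (a) the uniformity of this Gaussian approximation throughout a moderate-deviation window $|m-[r\sqrt n]|\lesssim\sqrt{n\log n}$, and (b) integrability of $\gamma(Z_\lambda(y))$, which follows from the asymptotics $Z_\lambda(y)\sim(e^{\pm\lambda}-1)(1\mp\Phi_{\kappa_2 t}(\pm y))$ as $y\to\pm\infty$ together with $\gamma(u)=\gamma'(0)u+O(u^2)$ near $u=0$. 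Since $D_\gamma=\mathbb R$ these integrability estimates hold for every $\lambda$, so $D_\Lambda=\mathbb R$ and $\Lambda$ is finite everywhere.

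To close the Gärtner--Ellis argument I verify essential smoothness: convexity of $\Lambda$ is inherited from $\gamma$ and from the pointwise convexity of $\lambda\mapsto\log(a+e^{\pm\lambda}b)$; differentiability follows from dominated convergence using the same tail bounds; steepness is automatic since $D_\Lambda^o=\mathbb R$. Gärtner--Ellis then yields the LDP with good, strictly convex rate function $\Lambda^*(x)=\sup_\lambda\{\lambda x-\Lambda(\lambda)\}$. The identification $\Lambda^*=I_1+I_2$ is then pure convex analysis. Computing $\partial_\alpha Z_\alpha(y)=1-F_\alpha(y)$ for $y>0$ and $-F_\alpha(y)$ for $y\leq 0$ reproduces \eqref{current_x}, so $\alpha(x)$ as defined by \eqref{alpha(x)} satisfies $\Lambda^*(x)=\alpha(x)\,x-\Lambda(\alpha(x))$. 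The Legendre identity $\gamma^*(\gamma'(u))=u\gamma'(u)-\gamma(u)$ together with the elementary identities $Br^*_{\Phi_{\kappa_2 t}(y)}(F_{\alpha(x)}(y))=\alpha(x)(1-F_{\alpha(x)}(y))-Z_{\alpha(x)}(y)$ for $y>0$ and $=-\alpha(x)F_{\alpha(x)}(y)-Z_{\alpha(x)}(y)$ for $y\leq 0$ then recombines the terms of $\alpha(x)x-\Lambda(\alpha(x))$ exactly into the $I_1$ and $I_2$ integrals of \eqref{I_1}--\eqref{I_2}.

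The main obstacle is step (a): controlling $|p_m-(1-\Phi_{\kappa_2 t}(y))|$ uniformly across the moderate-deviation window $|y|\lesssim\sqrt{\log n}$ with an error small enough that the summed deviation (weighted by $n^{-1/2}$) vanishes. Assumption~\ref{assmpn1} is exactly tailored to this: the exponential moments of $p(\cdot)$ yield exponential Chebyshev bounds on $X(nt)$ and a local CLT with exponentially small error, which together give the required uniform approximation. Once this analytic input is in place, the LDP and the explicit identification of the rate function both follow by routine convex analysis.
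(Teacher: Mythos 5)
Your proposal is correct and follows essentially the same route as the paper: compute the log-moment-generating function via the i.i.d.\ product structure, pass to $\Lambda$ through a CLT/Riemann-sum argument with moderate-deviation control of the tail sum, invoke G\"artner--Ellis, and identify $\Lambda^*=I_1+I_2$ by Legendre duality. The only cosmetic difference is in the final identification, where the paper verifies $I'=(\Lambda')^{-1}$ and cites Rockafellar's Theorem 26.5 while you evaluate $\Lambda^*$ directly at the maximizer $\alpha(x)$ and recombine terms via the Bernoulli Legendre identities --- the same computation run in opposite directions.
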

A few words on the rate function. $I$ has a unique zero at zero. The rate function $I$ balances two costs: the cost of deviations in the initial occupation variables, given by $I_1$, and the cost of deviations in the probability with which particles cross the characteristic lines, given by $I_2$.  In the macroscopic picture, $1-\P(y)$ is the a priori probability with which particles initially at distance $y>0$ from the characteristic line cross it by time $t$, while a particle at distance $y\leq 0$ crosses the characteristic line with probability $\P(y)$.  

An intuitive understanding of the LDP is as follows. To allow a current of size $x$ at time $t$  the system deviates in such a way that its behavior is governed by a new probability measure. Under this new probability measure  the mean number of particles initially at site $y$ is $\gamma'(Z_{\alpha(x)}(y))$ and the probabilities  $1-\P(y)$ and $\P(y)$ are tilted to give new probabilities  $1-F_{\alpha(x)}(y)$ and $F_{\alpha(x)}(y)$.
 The term $Br_{\P(y)}(F_{\alpha(x)}(y))$ measures the cost of the deviation of the probability $1-\P(y)$ (or $\P(y)$) to $1-F_{\alpha(x)}(y)$ (or $F_{\alpha(x)}(y)$).  The new measure depends on $\alpha(x)$ which is chosen so that the mean current under the new measure is $x$, this is evident from \eqref{current_x}.

 The modus operandi for the proof involves using non-rigorous, intuitive ideas for finding a candidate for the rate function and then checking if it is, in fact, the correct rate function. 

We give explicit formulas for the two simplest cases:
the stationary situation with i.i.d.\ Poisson occupations, 
and the case of  deterministic initial occupations.

\begin{cor}
\begin{enumerate}
\item If $\eta_0^n(\cdot)\sim Poisson(\rho)$, then under assumptions \ref{assmpn1} and \ref{assmpnld}, the rate function is 
\begin{equation}
\label{rate_fn}
\begin{split}
I(x)=x\log\biggl(\frac{x\sqrt{\pi}}{\rho\sqrt{2\kappa_2t}}+\sqrt{1+\frac{x^2\pi}{2\rho^2\kappa_2t}}\biggr)-\rho\sqrt{\frac{2\kappa_2t}{\pi}}\biggl(\sqrt{1+\frac{x^2\pi}{2\rho^2\kappa_2t}}-1\biggr)\text{ for }x\in\mathbb R.
\end{split}
\end{equation}
\item If $\eta_0^n(\cdot)\equiv 1$, then under assumption \ref{assmpn1} the rate function is
\[I(x)=\int_{-\infty}^\infty\biggl[(1-F_\a(y))\log\left(\tfrac{1-F_\a(y)}{1-\P(y)}\right)+F_\a(y)\log\left(\tfrac{F_\a(y)}{\P(y)}\right)\biggr]dy\]
where $F_\a(y)=\frac{e^{-\a}\P(y)}{1-\P(y)+e^{-\a}\P(y)}$ with $\a$ chosen so that 
\[x=\int_0^\infty(1-F_\a(y))dy-\int_{-\infty}^0F_\a(y)dy.\]
\end{enumerate}
\label{ldpcor}
\end{cor}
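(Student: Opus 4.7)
The plan is to specialize Theorem \ref{ld} to the two initial distributions; in each case the task is purely computational, since the existence of the LDP and the decomposition $I=I_1+I_2$ are already in hand.

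For part (1), with $\eta_0^n(m)\sim\mathrm{Poisson}(\rho)$ one has $\gamma(\theta)=\rho(e^\theta-1)$, so it is cleanest to compute $\Lambda(\lambda)$ directly rather than $I_1$ and $I_2$ separately. Using the identity $e^{Z_\lambda(y)}-1=(e^\lambda-1)(1-\Phi_{\kappa_2t}(y))$ for $y>0$ and $(e^{-\lambda}-1)\Phi_{\kappa_2t}(y)$ for $y\le 0$, combined with the Gaussian moment identity $\int_0^\infty(1-\Phi_{\kappa_2t}(y))\,dy=\int_{-\infty}^0\Phi_{\kappa_2t}(y)\,dy=\sqrt{\kappa_2t/(2\pi)}$ (half the mean absolute value of an $N(0,\kappa_2t)$ variable), one obtains
\[\Lambda(\lambda)=\rho\sqrt{2\kappa_2t/\pi}\,(\cosh\lambda-1).\]
This $\Lambda$ is real-analytic with $D_\Lambda=\mathbb R$, so $I(x)=\sup_\lambda(\lambda x-\Lambda(\lambda))$ is attained at the unique critical point where $\sinh\lambda=u:=x\sqrt{\pi}/(\rho\sqrt{2\kappa_2t})$. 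Substituting $\lambda=\log(u+\sqrt{1+u^2})$ and $\cosh\lambda=\sqrt{1+u^2}$ into $\lambda x-\Lambda(\lambda)$ produces \eqref{rate_fn} after noting $u^2=x^2\pi/(2\rho^2\kappa_2t)$.

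For part (2), with $\eta_0^n\equiv 1$ we have $\gamma(\theta)=\theta$, so $\gamma'\equiv 1$ and $\gamma^*$ vanishes at $1$ and equals $+\infty$ elsewhere. Hence the integrand of $I_1$ in \eqref{I_1} is $\gamma^*(\gamma'(Z_\a(y)))=\gamma^*(1)=0$, giving $I_1(x)\equiv 0$. The $\gamma'$ factor in $I_2$ is likewise constantly $1$, reducing \eqref{I_2} to $I(x)=\int_{-\infty}^\infty Br^*_{\Phi_{\kappa_2t}(y)}(F_\a(y))\,dy$; expanding $Br^*$ as in its definition yields exactly the advertised integrand. The implicit equation \eqref{current_x} for $\a(x)$ likewise loses its $\gamma'$ weights and collapses to $x=\int_0^\infty(1-F_\a(y))\,dy-\int_{-\infty}^0F_\a(y)\,dy$, matching the stated constraint.

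The only step that is slightly delicate rather than mechanical is the Poisson Legendre transform: one must observe that $\Lambda'$ is a strictly monotone bijection $\mathbb R\to\mathbb R$ (so the critical equation has a solution for every $x\in\mathbb R$, consistent with the essential smoothness required by Theorem \ref{ld}) and then verify the closed form term by term. The deterministic case is a direct reading of \eqref{I_1}--\eqref{current_x} once the value of $\gamma$ is identified, so no genuine obstacle arises in either part.
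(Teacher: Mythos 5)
Your proposal is correct and follows essentially the same route as the paper: for the Poisson case the paper likewise computes $\Lambda(\lambda)=\rho\sqrt{\kappa_2 t/(2\pi)}(e^\lambda+e^{-\lambda}-2)$ directly from the Gaussian tail integral and then takes the explicit Legendre transform, and for the deterministic case the paper simply observes the corollary "comes as a special case of \eqref{RateFn}," which is exactly your substitution $\gamma(\theta)=\theta$, $\gamma'\equiv 1$, $\gamma^*(1)=0$ into $I_1$, $I_2$, and the constraint \eqref{current_x}. Both your $\cosh$ form and critical-point computation match the paper's formula \eqref{rate_fn}, so the argument is complete.
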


For the process level, under the stationary distribution, we can show an abstract LDP by applying a theorem from \cite{KF}. But currently we do not have an attractive representation of the rate function.  The rate function is given in terms of a variational expression in \eqref{rate_process}.
\begin{thm}
\label{LDP_process}
If $\eta_0^n(\cdot )\sim Poisson(\rho)$, then under assumptions \ref{assmpn1} and \ref{assmpnld}  the sequence of processes $\{n^{-1/2}Y_n(\cdot,0)\}$ in $D_{\mathbb R}[0,\infty)$ satisfies a LDP with a good rate function.
\end{thm}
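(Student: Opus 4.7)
The plan proceeds in three stages: finite-dimensional LDP via Gärtner-Ellis, exponential tightness in the Skorohod space, and assembly via the abstract theorem of \cite{KF}. For stage one, for every $k\ge 1$ and times $0\le t_1<\cdots<t_k$, I aim to show that $n^{-1/2}(Y_n(t_1,0),\ldots,Y_n(t_k,0))$ satisfies an LDP in $\mathbb R^k$ at speed $\sqrt n$ with a good, strictly convex rate function $I_{t_1,\ldots,t_k}$. To apply Gärtner-Ellis I compute
\[\Lambda_k(\lambda_1,\ldots,\lambda_k)=\lim_{n\to\infty}\frac{1}{\sqrt n}\log E\exp\left(\sum_{i=1}^k\lambda_i Y_n(t_i,0)\right).\]
Under the Poisson$(\rho)$ stationary measure, independence of $\{\eta_0^n(m)\}_m$ across sites together with independence of the walks factors this log-MGF into a sum over starting sites $m$. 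For each $m$, the exponent is a linear combination of $\{-1,0,+1\}$-valued indicators encoding whether a single walk starting at $m$ has crossed the levels $[nvt_1],\ldots,[nvt_k]$ at the corresponding times. Averaging over the Poisson occupation and then over the walk, and passing to the limit via a Riemann-sum argument with substitution $m=[y\sqrt n]$ together with the local CLT available under Assumption \ref{assmpn1}, produces a finite, smooth, strictly convex $\Lambda_k$, the natural $k$-parameter analogue of $\Lambda$ from Theorem \ref{ld}. Essential smoothness reduces to the single-time case.

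Stage two is exponential tightness of $\{n^{-1/2}Y_n(\cdot,0)\}$ in $D_{\mathbb R}[0,\infty)$ at speed $\sqrt n$. This splits into an exponential compact-containment bound for $\sup_{t\le T}|n^{-1/2}Y_n(t,0)|$ and an exponential modulus-of-continuity bound. The former follows from Theorem \ref{ld} combined with a maximal inequality and the single-walk exponential estimates furnished by Assumption \ref{assmpn1}. For the latter, the increment $Y_n(t,0)-Y_n(s,0)$ decomposes as a sum, over walks, of independent $\{-1,0,+1\}$-valued contributions determined by whether each walk crosses the characteristic between times $ns$ and $nt$. Bounding its log-MGF sitewise under the Poisson initial law and summing over $m$ should yield an estimate of the form
\[P\{|Y_n(t,0)-Y_n(s,0)|\ge\epsilon\sqrt n\}\le\exp\bigl(-\sqrt n\,\psi(\epsilon,t-s)\bigr),\]
with $\psi(\epsilon,h)\to 0$ as $h\to 0$, which can be chained into a Skorohod oscillation estimate by a standard dyadic argument.

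Combining stages one and two, the abstract theorem of \cite{KF} yields a full LDP for $\{n^{-1/2}Y_n(\cdot,0)\}$ on $D_{\mathbb R}[0,\infty)$ with good rate function equal to the projective-limit supremum
\[J(\zeta)=\sup\bigl\{I_{t_1,\ldots,t_k}\bigl(\zeta(t_1),\ldots,\zeta(t_k)\bigr):k\ge 1,\ 0\le t_1<\cdots<t_k\bigr\},\]
which is the variational expression referenced in the statement; goodness of $J$ is automatic from exponential tightness. The main obstacle is stage two: $Y_n(\cdot,0)$ is not Markov, and its increments inherit correlations through the shared initial configuration, so the increment log-MGF must be controlled by first conditioning on $\eta_0^n$, exploiting the conditional independence of the walks, and then integrating out the Poisson law. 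Upgrading these pointwise increment bounds into a Skorohod-modulus bound at speed $\sqrt n$ requires a chaining argument that handles accumulated discontinuities of size $n^{-1/2}$, and I expect this to be the most technical portion of the proof.
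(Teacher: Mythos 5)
Your plan is correct in outline and reaches the same endpoint, but it takes a genuinely different route to the finite-dimensional LDP, and it is worth comparing the two. The paper does not use G\"artner--Ellis in the multi-time step at all. Instead it exploits the Poisson structure directly: fixing $0\le t_1<\cdots<t_k$, it encodes the joint crossing pattern of each walk by a $\{0,1\}^k$-vector, and for each pattern $\vec u\neq\vec 0$ counts the number $N_n^1(\vec u)$ of right-side particles with pattern $\vec u$ and the number $N_n^2(\vec u)$ of left-side particles with pattern $\vec u$. Because the $\eta_0^n(m)$ are i.i.d.\ Poisson and the walks are independent, Poisson thinning and superposition make the collection $\{N_n^i(\vec u)\}$ a vector of \emph{independent Poisson} random variables with explicitly computable rates; the standard Poisson LDP gives a good, explicit rate function $J$ for this vector, and $(Y_n(t_1),\ldots,Y_n(t_k))$ is a deterministic linear image of it, so the contraction principle (Theorem 4.2.1 in \cite{DZ}) delivers $I_{t_1,\ldots,t_k}$ directly, in the form of an infimum of $J$ subject to a linear constraint. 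This avoids computing and analyzing the $k$-dimensional limiting log-MGF $\Lambda_k$ and avoids any essential-smoothness discussion, and it yields a concrete variational formula for the finite-dimensional rate functions rather than an abstract Legendre transform. Your G\"artner--Ellis route would also work here, though your remark that ``essential smoothness reduces to the single-time case'' is not the right reason: the point is that for Poisson occupations $\gamma$ is finite on all of $\mathbb R$, so $D_{\Lambda_k}=\mathbb R^k$ has empty boundary and steepness is vacuous; differentiability is the only thing to check. The contraction-principle approach is cleaner and tailored to the Poisson hypothesis, while yours is more portable to other initial laws.

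On the second stage, you make exponential tightness in $D_{\mathbb R}[0,\infty)$ an explicit hurdle and sketch a sitewise log-MGF increment bound plus a dyadic chaining argument. The paper is terse here: it invokes Theorem~4.30 of \cite{KF} immediately after the finite-dimensional step and does not separately verify exponential tightness. If the Feng--Kurtz theorem being cited already packages the required control (or if one reads the paper as implicitly relying on the fact that the $N_n^i(\vec u)$ are Poisson and hence have explicit exponential tail bounds uniformly in $n$), this is fine, but your instinct that some increment/oscillation control is needed for a Skorohod-space LDP is sound and your proposed route to it — condition on $\eta_0^n$, use conditional independence of the walks, integrate out the Poisson law, then chain — is the natural one and matches the spirit of the tightness work the paper does elsewhere (Section 4). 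In short: correct plan, different and somewhat heavier machinery for the finite-dimensional step, and a more explicit treatment of the tightness step than the paper itself gives.
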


\bigskip

There exist some large deviation results for independent random walk systems in the literature \cite{Lee},\cite{CD},\cite{CG}. These 
papers essentially deal with large deviations of occupation times of sites.  Lee \cite{Lee} and Cox and Durrett \cite{CD}
 find LDP's for weighted occupation times of sites under 
deterministic and stationary (i.i.d.\ Poisson)
 initial distributions.  Even the normalizations of the LDP's
were distinct for these two cases.  This is in contrast to 
our current large deviations in Theorem \ref{ld}  where
the normalizations for the Poisson and the deterministic
case were the same. 

\section{\bf Weak Convergence of Finite Dimensional Distributions}\label{sec:finitedim}
We begin the proof of Theorem \ref{main_thm} by first showing weak convergence of the finite dimensional distributions of $n^{-1/4}Y_n(\cdot,\cdot)$.
\begin{prop}
\label{finite_dim}
Define $Y_n(t,r)$ as above. Then under assumptions \ref{assmpn1} and \ref{assmpn3} the finite dimensional distributions of the processes $\{Y_n(t,r):(t,r)\in[0,T]\times[-S,S]\}$ converge weakly as $n\to\infty$ to the finite-dimensional distributions of the mean zero Gaussian process $\{Z(t,r):(t,r)\in[0,T]\times[-S,S]\}$ with covariance given in (\ref{cov}).
\end{prop}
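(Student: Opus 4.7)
The plan is to prove convergence of every linear combination $\sum_{k=1}^N a_k n^{-1/4}Y_n(t_k,r_k)$ to a Gaussian random variable of the correct variance; by the Cram\'er--Wold device this yields convergence of all finite-dimensional distributions. Fix $\{(t_k,r_k)\}_{k=1}^N\subset[0,T]\times[-S,S]$ and reals $a_1,\ldots,a_N$, and write $\xi_{m,j}(t,r)$ for the $(m,j)$-summand in \eqref{Y2} and $p_m(t,r):=E\xi_{m,j}(t,r)$. The first step is the orthogonal decomposition
\[
Y_n(t,r)-EY_n(t,r)=\tilde M_n(t,r)+\tilde N_n(t,r),
\]
with
\[
\tilde M_n(t,r):=\sum_{m}(\eta_0^n(m)-\rho_0^n(m))p_m(t,r),\quad
\tilde N_n(t,r):=\sum_{m}\sum_{j=1}^{\eta_0^n(m)}\bigl(\xi_{m,j}(t,r)-p_m(t,r)\bigr).
\]
These two families are uncorrelated because $E[\tilde N_n\mid\eta_0^n]=0$; $\tilde M_n$ encodes the initial noise and is a sum of independent summands indexed by sites, while $\tilde N_n$ encodes the dynamical noise and, conditional on $\eta_0^n$, is a sum of independent bounded centered summands indexed by individual particles.

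Next I would establish a joint CLT for $A_n:=\sum_k a_k n^{-1/4}\tilde M_n(t_k,r_k)$ and $B_n:=\sum_k a_k n^{-1/4}\tilde N_n(t_k,r_k)$. For $A_n$, the Lindeberg--Feller theorem applies directly: the twelfth-moment bound \eqref{momentassmpn} controls the Lindeberg quantity, and an effective window $|m-[r\sqrt{n}]|=O(\sqrt{n\log n})$ (outside which $p_m$ is super-polynomially small by the exponential-moment Assumption \ref{assmpn1}) makes the variance of order one. For $B_n$ I would apply a conditional CLT given $\eta_0^n$: Lindeberg is trivial because the summands are bounded, and the conditional variance converges in probability to a deterministic limit by the weak law of large numbers for $\{\eta_0^n(m)\}$ together with the spatial averaging in Assumption \ref{assmpn3}. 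Conditioning on $\eta_0^n$ then gives joint convergence of $(A_n,B_n)$ to independent Gaussians, so $A_n+B_n$ is asymptotically Gaussian with variance equal to the sum.

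The crux is the covariance computation: one needs
\[
n^{-1/2}E[\tilde M_n(s,q)\tilde M_n(t,r)]\to v_0\,\Gamma_0((s,q),(t,r)),
\]
\[
n^{-1/2}E[\tilde N_n(s,q)\tilde N_n(t,r)]\to \rho_0\,\Gamma_q((s,q),(t,r)).
\]
The exact expressions are
\[
\sum_m v_0^n(m)\,p_m(s,q)p_m(t,r)\qquad\text{and}\qquad\sum_m\rho_0^n(m)\,\Cov\bigl(\xi_{m,1}(s,q),\xi_{m,1}(t,r)\bigr),
\]
which after the change of variables $m=[r\sqrt{n}]+[y\sqrt{n}]$ become Riemann sums of step $n^{-1/2}$. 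A quantitative local central limit theorem for the walk (available thanks to Assumption \ref{assmpn1}) replaces $p_m$ by its Gaussian approximation involving $\Phi_{\kappa_2 t}$, and Assumption \ref{assmpn3} replaces the local averages of $\rho_0^n,v_0^n$ by $\rho_0,v_0$. The resulting Gaussian integrals collapse onto $\Gamma_0$ and $\Gamma_q$ via the identity $\Psi_{\sigma^2}(x)=E[(\zeta-x)^+]$ for $\zeta\sim\mathcal N(0,\sigma^2)$; for the two-time piece, a short Markov-property calculation writes the joint probability of crossing both characteristics as a transition-kernel product that, after integration in $y$, produces the $\Psi_{\kappa_2(s+t)}-\Psi_{\kappa_2|t-s|}$ structure of \eqref{gammaq}.

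The main obstacle I expect is not the structural CLT but the uniform control in these covariance integrals: keeping all error terms $o(1)$ simultaneously for every pair $((s,q),(t,r))$ requires Cram\'er-type exponential tail bounds on the walk (supplied by Assumption \ref{assmpn1}) together with the precise rate of initial-condition averaging. The constraint $n^{-1/4}L(n)\to 0$ in Assumption \ref{assmpn3} is exactly what is needed to exchange the discrete sums against $\rho_0^n,v_0^n$ with Riemann integrals against $\rho_0,v_0$ without losing the $\sqrt{n}$ factor that feeds the $n^{1/4}$ fluctuation scale. For this bookkeeping I would model my computation on the one-parameter argument of \cite{SEPP}, whose estimates adapt to the present multivariate setting once the $\tilde M_n,\tilde N_n$ decomposition has been made.
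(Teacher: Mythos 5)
Your argument is correct and reaches the same covariance computation, but by a genuinely different decomposition. The paper works site-by-site: it defines $U_m(t,r)=n^{-1/4}\sum_{j}\xi_{m,j}(t,r)-n^{-1/4}\rho_0^n(m)p_m(t,r)$, truncates $\sum_m U_m$ into $S_1$ (over $|m|\leq r(n)\sqrt n$) plus a negligible tail $S_2$, applies Lindeberg--Feller once to $S_1$, and extracts the two covariance contributions through the random-sum identity $\Var[T_K]=EK\cdot\Var Z+(EZ)^2\Var K$ inside Lemma \ref{s1}. There the initial/dynamical split happens only at the level of variances. You instead lift that split to the level of random variables at the outset, writing $Y_n-EY_n=\tilde M_n+\tilde N_n$; note that $\tilde M_n+\tilde N_n$ restricted to site $m$ recovers exactly the paper's $U_m$, so the decomposition is a refinement, not a replacement. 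Your version directly mirrors the stochastic-integral representation \eqref{sde} --- $\tilde N_n$ plays the role of the $dW$-integral, $\tilde M_n$ the $dB$-integral --- and it has the technical advantage that, conditional on $\eta_0^n$, the $\tilde N_n$-summands are bounded by $n^{-1/4}$, so the conditional Lindeberg condition is automatic; the moment bound \eqref{momentassmpn} is then needed only to control $\tilde M_n$ (and to verify the WLLN for the conditional variance), which partly replaces the paper's $S_1/S_2$ truncation with the choice of $r(n)$. The price is the de-conditioning step: to turn the conditional CLT for $\tilde N_n$ plus the unconditional one for $\tilde M_n$ into a joint CLT, you need a stable-convergence argument (conditional characteristic functions converging in probability together with dominated convergence) and must verify that $\Var(\tilde N_n\mid\eta_0^n)$ concentrates; both are true here (the conditional variance has mean $O(1)$ and variance $O(n^{-1/2})$) but neither is free, whereas the paper's single-shot Lindeberg--Feller avoids them. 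Either way the limiting covariance is obtained by the same Riemann-sum-plus-CLT passage to $\Gamma_0$ and $\Gamma_q$, and both approaches still need the separate observation (Lemma \ref{meanvanish}) that $n^{-1/4}EY_n\to 0$ uniformly.
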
 
To prove convergence of finite dimensional distributions we use Lindeberg-Feller and check the conditions of Lindeberg-Feller by brute force. We show that the expected value of the process converges uniformly to $0$ and hence we can consider the centered process when proving convergence. We also appeal to large deviations of random walks to control the contributions to the current process from distant particles.

Fix $N$ space-time points: 
$(t_1,r_1),(t_2,r_2),\ldots,(t_N,r_N)$ where $(t_i,r_i)\neq(t_j,r_j)$ for $i\neq j$.
We will prove that as $n\to\infty$ the vector \[n^{-1/4}(Y_n(t_1,r_1),Y_n(t_2,r_2),...,Y_n(t_N,r_N))\]converges in distribution to the mean-zero Gaussian random vector \[(Z(t_1,r_1),Z(t_2,r_2),...,Z(t_N,r_N))\] with covariance 
$$EZ(t_i,r_i)Z(t_j,r_j)=\rho_0\Gamma_q((t_i,r_i),(t_j,r_j))+v_0\Gamma_0((t_i,r_i),(t_j,r_j)).$$

Let $\theta=(\theta_1,...,\theta_N)\in \mathbf R^N$ be  arbitrary.
Recall that $X_{k,j}(\cdot)$ is the $j$th random walk that starts at site $k$. 
\begin{align*}&n^{-1/4}\sum_{i=1}^N\theta_iY_n(t_i,r_i)\\&=n^{-1/4}\sum_{i=1}^N\theta_i\biggl\{\sum_{m=-\infty}^\infty\sum_{j=1}^{\eta_0^n(m)}\bigl(\mathbf{1} \{X_{m,j}(nt_i)-X_{m,j}(0)\leq [nvt_i]+[r_i\sqrt{n}]-m\}\mathbf{1} \{m>[r_i\sqrt{n}]\}\\&\qquad-\mathbf{1} \{X_{m,j}(nt_i)-X_{m,j}(0)>[nvt_i]+[r_i\sqrt{n}]-m\}\mathbf{1} \{m\leq [r_i\sqrt{n}]\}\bigr)\biggr\}\end{align*}

Let \[A^{t,r}_{m,j}=\{X_{m,j}(nt)-X_{m,j}(0)\leq [nvt]+[r\sqrt{n}]-m\}\] and  \[A^{t,r}=\{X(nt)\leq[nvt]+[r\sqrt{n}]-m\}\] where $X(\cdot)$ represents a random walk with rates $p(x)$ starting at the origin.
The evolution of the random walks is independent of their initial occupation numbers $\eta^n_0(x)$.
Define \begin{align*}U_m(t,r):=&n^{-1/4}\sum_{j=1}^{\eta_0^n(m)}\bigl\{\mathbf{1} \{A^{t,r}_{m,j}\}\mathbf{1} \{m>[r\sqrt{n}]\}-\mathbf{1} \{(A^{t,r}_{m,j})^c\}\mathbf{1} \{m\leq [r\sqrt{n}]\}\bigr\}\\
&-n^{-1/4}\rho_0^n(m)\bigl\{P\{A^{t,r}\}\mathbf{1} \{m>[r\sqrt{n}]\}-P\{(A^{t,r})^c\}\mathbf{1} \{m\leq [r\sqrt{n}]\}\bigr\}\end{align*} and 
\[\bar{U}_m:=\sum_{i=1}^N\theta_iU_m(t_i,r_i).\]
Then we can write
\begin{align*}n^{-1/4}\sum_{i=1}^N\theta_iY_n(t_i,r_i)=&n^{-1/4}\sum_{i=1}^N\theta_i\{Y_n(t_i,r_i)-EY_n(t_i,r_i)\}+n^{-1/4}\sum_{i=1}^N\theta_i EY_n(t_i,r_i)\\
&=\sum_{m=-\infty}^\infty \bar{U}_m+n^{-1/4}\sum_{i=1}^N\theta_i EY_n(t_i,r_i).\end{align*}

We split $\sum_{m=-\infty}^\infty \bar{U}_m$ into two sums $S_1$ and $S_2$ as follows.
Choose $r(n)$ so that $r(n)=o(\sqrt{\log\ n})$ and $r(n)\to\infty$ slowly enough that \[r(n)H(n^{1/8})\to 0\text{ as }n\to\infty,\]
where $H(M)=\sup_{n\geq 1,x\in \mathbb Z}E[\eta_0^n(x)^21\{\eta_0^n(x)\geq M\}].$
Write
\begin{align*}n^{-1/4}\sum_{i=1}^N\theta_i\{Y_n(t_i,r_i)-EY_n(t_i,r_i)\}&=\sum_{|m|\leq r(n)\sqrt{n}}\bar{U}_m+\sum_{|m|>r(n)\sqrt{n}}\bar{U}_m\\&=:S_1+S_2.\end{align*}
We show that $S_2$ goes to $0$ in $L^2$ and use Lindeberg-Feller for $S_1$ to show that it converges to a mean-zero normal distribution.
\begin{lemma}
\label{s2}
$ES_2^2\to0$ as $n\to\infty$.
\end{lemma}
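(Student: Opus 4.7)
The plan is to exploit independence: since the random variables $\{\bar U_m\}_{m\in\mathbb Z}$ are built from disjoint groups of independent primitives (the initial count $\eta_0^n(m)$ at site $m$ together with the walks $\{X_{m,j}\}_{j\ge 1}$ started at $m$), they are mutually independent. They also have mean zero by construction, so
\[
ES_2^2=\sum_{|m|>r(n)\sqrt n}E\bar U_m^2\le N\sum_{i=1}^N\theta_i^2\sum_{|m|>r(n)\sqrt n}EU_m(t_i,r_i)^2,
\]
and it suffices to prove $\sum_{|m|>r(n)\sqrt n}EU_m(t,r)^2\to 0$ for each fixed $(t,r)\in[0,T]\times[-S,S]$.

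Next I would compute $EU_m(t,r)^2$ by conditioning on $\eta_0^n(m)$. Setting $p_m:=P(A^{t,r})$ if $m>[r\sqrt n]$ and $p_m:=1-P(A^{t,r})$ if $m\le[r\sqrt n]$, a routine conditional-variance calculation (which uses that the walks $X_{m,j}$ are i.i.d.\ for fixed $m$) yields
\[
EU_m(t,r)^2=n^{-1/2}\bigl[\rho_0^n(m)\,p_m(1-p_m)+v_0^n(m)\,p_m^2\bigr]\le Cn^{-1/2}p_m,
\]
where the inequality uses $p_m^2\le p_m$ together with the uniform bound $\sup_{n,m}(\rho_0^n(m)+v_0^n(m))\le C$ supplied by the twelfth-moment assumption \eqref{momentassmpn}.

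It then remains to bound $\sum_{|m|>r(n)\sqrt n}p_m$ via a random walk large deviation estimate. For $|r|\le S$ and $|m|>r(n)\sqrt n$ with $r(n)>S+1$ (eventually true since $r(n)\to\infty$), the distance $D_m:=|m-[r\sqrt n]|$ satisfies $D_m\ge (r(n)-S)\sqrt n$; and $p_m$ is the probability that a mean-$nvt$ random walk deviates from its mean by at least $D_m$ in the appropriate direction. Assumption \ref{assmpn1} provides exponential moments, so Cram\'er's theorem yields a constant $c=c(T)>0$ such that $p_m\le \exp(-c D_m^2/n)$ in the moderate-deviation regime $D_m/n\to 0$, and a stronger exponential decay in $D_m$ outside that regime. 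Replacing the sum with a Gaussian integral produces
\[
\sum_{|m|>r(n)\sqrt n}p_m\le\frac{C\sqrt n}{r(n)}\,\exp\bigl(-c(r(n)-S)^2\bigr),
\]
so combining with the $n^{-1/2}$ prefactor gives $ES_2^2\le C\,r(n)^{-1}\exp(-c(r(n)-S)^2)\to 0$ because $r(n)\to\infty$.

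The main technical point requiring care will be producing a random walk tail bound that is uniform in $t\in[0,T]$ (so the constant $c$ does not depend on $t$ or on $m$) and keeping track of the $|m|\gg n$ tail where the Gaussian approximation breaks down but the full Cram\'er rate function gives an even stronger exponential bound. Everything else is a direct independence-plus-variance estimate; in particular, only the crude truncation $r(n)\to\infty$ is needed here, while the finer condition $r(n)H(n^{1/8})\to 0$ will be used in the Lindeberg-Feller analysis of $S_1$ rather than in this lemma.
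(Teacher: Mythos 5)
Your proposal is correct and follows essentially the same route as the paper: mean-zero independence reduces to a single-$(t,r)$ second-moment sum; the random-sum (conditional) variance formula yields $EU_m^2=n^{-1/2}[\rho_0^n(m)p_m(1-p_m)+v_0^n(m)p_m^2]\le Cn^{-1/2}p_m$; and the two-regime exponential-moment tail bound (Gaussian for moderate deviations, linear for large) plus $r(n)\to\infty$ closes the argument. Your side remark that only $r(n)\to\infty$ is needed here while $r(n)H(n^{1/8})\to 0$ is reserved for the $S_1$ analysis is also accurate.
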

\begin{proof}
$EU_m=0$, so $E\bar{U}_m=0$.
$S_2$ is therefore a sum of independent mean zero terms $\bar{U}_m$.
By Schwarz Inequality,
\begin{align*}
ES_2^2=\sum_{|m|\geq [r(n)\sqrt{n}]+1} E\bar{U}_m^2\leq\|\theta\|^2\sum_{i=1}^N\sum_{|m|\geq [r(n)\sqrt{n}]+1} EU_m^2(t_i,r_i).\end{align*}
Since $N$ is fixed, it suffices to show for fixed $(t,r)$
\[\lim_{n\to\infty}\sum_{m\geq [r(n)\sqrt{n}]+1}^\infty EU_m^2(t,r)=0.\]
Recall that $A^{t,r}_{m,j}=\{X_{m,j}(nt)-X_{m,j}(0)\leq [nvt]+[r\sqrt{n}]-m\}$ and  $A^{t,r}=\{X(nt)\leq[nvt]+[r\sqrt{n}]-m\}$.
If $T_K=\sum_{i=1}^KZ_i$ is a random sum of i.i.d. summands $Z_i$ independent of random K, then
\[\Var[T_K]=EK\cdot \Var Z_1+(EZ_1)^2\cdot \Var K.\]
For fixed $m$, take $K=\eta_0^n(m)$ and 
\[Z_j=\mathbf{1} \{A^{t,r}_{m,j}\}\mathbf{1} \{m>[r\sqrt{n}]\}-\mathbf{1} \{(A^{t,r}_{m,j})^c\}\mathbf{1} \{m\leq [r\sqrt{n}]\}.\]
Then,
\begin{align*}E[U_m^2(t,r)]=\Var\bigl[n^{-1/4}\sum_{j=1}^{\eta_0^n(m)}\bigl\{\mathbf{1} \{A^{t,r}_{m,j}\}\mathbf{1} \{m>[r\sqrt{n}]\}-\mathbf{1} \{(A^{t,r}_{m,j})^c\}\mathbf{1} \{m\leq [r\sqrt{n}]\}\bigr\}\bigr].\end{align*}
\begin{align*}\Var Z_j=&P(A^{t,r}_{m,j})P((A^{t,r}_{m,j})^c)\mathbf{1}\{ m>[r\sqrt{n}]\}+P(A^{t,r}_{m,j})P((A^{t,r}_{m,j})^c)\mathbf{1}\{ m\leq [r\sqrt{n}]\}\\&
=P(X(nt)\leq[nvt]+[r\sqrt{n}]-m)P(X(nt)>[nvt]+[r\sqrt{n}]-m) .
\end{align*}
\begin{align*}[EZ_1]^2&=P(X(nt)\leq [nvt]+[r\sqrt{n}]-m)^2\mathbf{1}\{m>[r\sqrt{n}]\}\\
&\qquad+P(X(nt)>[nvt]+[r\sqrt{n}]-m)^2\mathbf{1}\{m\leq [r\sqrt{n}]\}.\end{align*}
\begin{align*}E[U_m^2(t,r)]&=n^{-1/2}\rho_0^n(m)P(A^{t,r})P((A^{t,r})^c)+n^{-1/2}v_0^n(m)\bigl[P(A^{t,r})^2\mathbf{1}\{m>[r\sqrt{n}]\}\\&\qquad \qquad \qquad \qquad \qquad \qquad \qquad  +P((A^{t,r})^c)^2\mathbf{1}\{m\leq [r\sqrt{n}]\}\bigr].\end{align*}
Using the uniform bound (\ref{momentassmpn}) on moments
we get 
\begin{align*}E[U_m^2(t,r)]&
\leq n^{-1/2}C\bigl[P(X(nt)\leq[nvt]+[r\sqrt{n}]-m)\mathbf{1}\{m>[r\sqrt{n}]\}\\&
\qquad+P(X(nt)>[nvt]+[r\sqrt{n}]-m)\mathbf{1}\{m\leq [r\sqrt{n}]\}\bigr].
\end{align*}

By standard large deviation theory, for arbitrarily small $\delta$, there is a constant $0<K<\infty$ such that when $m>[r\sqrt{n}]$,
\begin{equation}\label{ld_bounds_rw1}
P(X(nt)\leq [nvt]+[r\sqrt{n}]-m)\leq 
\begin{cases}
 e^{\{-K(m-[r\sqrt{n}])^2/nt\}} & \text{ if } |m-[r\sqrt{n}]|\leq nt\delta\\
 e^{\{-K|m-[r\sqrt{n}]|\}} &\text{ if }|m-[r\sqrt{n}]|>nt\delta
\end{cases}
\end{equation}
and when $m\leq [r\sqrt{n}]$,
\begin{equation}\label{ld_bounds_rw2}
P(X(nt)> [nvt]+[r\sqrt{n}]-m)\leq 
\begin{cases}
 e^{\{-K([r\sqrt{n}]-m)^2/nt\}} & \text{ if } |[r\sqrt{n}]-m|\leq nt\delta\\
 e^{\{-K|[r\sqrt{n}]-m|\}} &\text{ if }|[r\sqrt{n}]-m|>nt\delta.
\end{cases}
\end{equation}
Consequently,
\begin{align*}&\sum_{|m|=[r(n)\sqrt{n}]+1}^\infty E[U_m^2(t,r)]\\&\leq Cn^{-1/2}\sum_{m_1=[r(n)\sqrt{n}]-[r\sqrt{n}]+1}^{[nt\delta]}e^{-Km_1^2/nt}+Cn^{-1/2}\sum_{m_1=[nt\delta]+1}^\infty e^{-Km_1}\\
&\quad+Cn^{-1/2}\sum_{m_1=[r(n)\sqrt{n}]+[r\sqrt{n}]+1}^{[nt\delta]}e^{-Km_1^2/nt}+Cn^{-1/2}\sum_{m_1=[nt\delta]+1}^\infty e^{-Km_1}\\
&\leq C\int_{r(n)-r}^\infty e^{-Kx^2/t}dx+C\int_{r(n)+r}^\infty e^{-Kx^2/t}dx+2\frac{C}{\sqrt{n}}e^{-Knt\delta}.\frac{1}{1-e^{-K}}
\to 0 \end{align*} as $n\to\infty$ (since $r(n)\to\infty$).
Therefore, $ES_2^2\to 0$ as $n\to\infty$.
\end{proof}

Define
\begin{align*}&\Gamma_0^{(2)}((s,q),(t,r))=\int_{q\vee r}^\infty P(\sqrt{\kappa_2}B(s)\leq q-x)P(\sqrt{\kappa_2}B(t)\leq r-x)dx\\&+\int_{-\infty}^{q\wedge r}P(\sqrt{\kappa_2}B(s)>q-x)P(\sqrt{\kappa_2}B(t)>r-x)dx\\&-\mathbf{1}\{r>q\}\int_q^rP(\sqrt{\kappa_2}B(s)\leq q-x)P(\sqrt{\kappa_2}B(t)>r-x)dx\\&-\mathbf{1}\{q>r\}\int_r^qP(\sqrt{\kappa_2}B(s)>q-x)P(\sqrt{\kappa_2}B(t)\leq r-x)dx\end{align*}
and 
\begin{align*}\Gamma_q^{(1)}((s,q),(t,r))&=\int_{-\infty}^\infty\{P(\sqrt{\kappa_2}B(s)\leq q-x)P(\sqrt{\kappa_2}B(t)>r-x)\\&-P(\sqrt{\kappa_2}B(s)\leq q-x,\sqrt{\kappa_2}B(t)>r-x)\}dx;\end{align*}
$B(\cdot)$ is standard Brownian motion.
The covariance terms (\ref{gamma0}) and (\ref{gammaq}) appear in the calculations as $\Gamma_0^{(2)}$ and $\Gamma_q^{(1)}$ resp.
\begin{lemma}
\label{s1}
$S_1$ converges to the mean-zero normal distribution with variance
\begin{align*}\sigma^2&=\sum_{1\leq i,j\leq N}\theta_i\theta_j\bigl\{\rho_0\Gamma_q^{(1)}((t_i,r_i),(t_j,r_j))+v_0\Gamma_0^{(2)}((t_i,r_i),(t_j,r_j))\bigr\}.\end{align*}\\
\end{lemma}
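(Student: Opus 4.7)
My plan is to apply the Lindeberg--Feller CLT to the independent triangular array $\{\bar U_m:|m|\le r(n)\sqrt n\}$. Independence holds because different $m$ index disjoint initial occupations $\eta_0^n(m)$ and independent random-walk families, and the explicit centering built into $U_m$ gives $E\bar U_m=0$. Two things need to be verified: (a) the variances satisfy $\sum_{|m|\le r(n)\sqrt n} E\bar U_m^2\to \sigma^2$; and (b) the Lindeberg condition $\sum_m E[\bar U_m^2\mathbf{1}\{|\bar U_m|>\varepsilon\}]\to 0$ for every $\varepsilon>0$.

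For (a), I expand $E\bar U_m^2=\sum_{i,j}\theta_i\theta_j\, E[U_m(t_i,r_i)U_m(t_j,r_j)]$ and apply the random-sum variance/covariance formula already used in Lemma \ref{s2}: each covariance $E[U_m(t_i,r_i)U_m(t_j,r_j)]$ splits as $n^{-1/2}\rho_0^n(m)$ times a covariance of random-walk indicators (the dynamical noise) plus $n^{-1/2}v_0^n(m)$ times a product of random-walk probabilities (the initial noise). I then substitute $m=[r_i\sqrt n]+[x\sqrt n]$ so that each sum over $m$ becomes a Riemann sum of mesh $n^{-1/2}$. The standard Gaussian approximation $P(X(nt)\le[nvt]+k)\to \Phi_{\kappa_2 t}(k/\sqrt n)$, uniformly on compact $k/\sqrt n$ windows, converts these sums into the integrals defining $\Gamma_0^{(2)}$ and $\Gamma_q^{(1)}$; the four indicator cases according to the signs of $m-[r_i\sqrt n]$ and $m-[r_j\sqrt n]$ match exactly the four pieces in the definition of $\Gamma_0^{(2)}$, and a similar matching yields $\Gamma_q^{(1)}$. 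Assumption \ref{assmpn3} is what lets me pull $\rho_0^n(m)$ and $v_0^n(m)$ out of the sums and replace them by $\rho_0$ and $v_0$: the dominant mass sits on $|m-[r_i\sqrt n]|=O(\sqrt n)$, well inside the window $|m|\le A\sqrt{n\log n}$ where the local averages of $\rho_0^n,v_0^n$ converge (and still inside the cut $r(n)\sqrt n$ since $r(n)\to\infty$), and the $o(n^{-1/4})$ rate there is more than enough after multiplication by $n^{-1/2}$ and summation over $O(\sqrt n)$ terms. The tail pieces $|m-[r_i\sqrt n]|\gg \sqrt n$ are controlled by the large-deviation bounds (\ref{ld_bounds_rw1})--(\ref{ld_bounds_rw2}).

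For (b), note that $|U_m(t_i,r_i)|\le n^{-1/4}(\eta_0^n(m)+\rho_0^n(m))$, so
\[
|\bar U_m|\le C_\theta\, n^{-1/4}\bigl(\eta_0^n(m)+\rho_0^n(m)\bigr),
\]
and on $\{|\bar U_m|>\varepsilon\}$ one has $\eta_0^n(m)>c_\varepsilon n^{1/4}-\rho_0^n(m)$. Using the uniformly bounded twelfth moment from Assumption \ref{assmpn3} one gets $P(\eta_0^n(m)\ge c_\varepsilon n^{1/4})\le C n^{-3}$ and $E[\eta_0^n(m)^2 \mathbf{1}\{\eta_0^n(m)\ge c_\varepsilon n^{1/4}\}]\le C n^{-5/2}$, whence
\[
\sum_{|m|\le r(n)\sqrt n} E\bigl[\bar U_m^2\mathbf{1}\{|\bar U_m|>\varepsilon\}\bigr]\le C_\varepsilon\, n^{-1/2}\cdot r(n)\sqrt n\cdot n^{-5/2}\to 0,
\]
and in fact the twelfth-moment bound gives the stronger Lyapunov condition painlessly.

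The main obstacle is step (a): the bookkeeping needed to identify the four Riemann sums with the four integrals in $\Gamma_0^{(2)}$ and to show that the Gaussian approximation for the random walks is valid uniformly over the $m$-range that actually contributes. Once that uniform local CLT / moderate-deviation comparison is in hand, and the tail estimates from Lemma \ref{s2} and Assumption \ref{assmpn3} are used to discard the unwanted regions, the variance limit falls out, and Lindeberg is routine.
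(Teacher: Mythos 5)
Your proposal is correct and follows essentially the same route as the paper: Lindeberg--Feller on the independent array $\{\bar U_m\}$, the random-sum covariance identity to split each $E[U_m(t_i,r_i)U_m(t_j,r_j)]$ into a $\rho_0^n(m)$-weighted dynamical piece and a $v_0^n(m)$-weighted initial-noise piece, a Riemann sum together with the Gaussian approximation of $X(nt)$ and the large-deviation tail bounds to produce $\Gamma_q^{(1)}$ and $\Gamma_0^{(2)}$, and the uniform moment bound from Assumption \ref{assmpn3} to dispose of the Lindeberg condition. The only cosmetic difference is that you verify Lindeberg directly via a Markov/twelfth-moment estimate rather than through the auxiliary quantity $H$ built into the choice of $r(n)$, but both reduce to the same moment hypothesis.
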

\begin{proof}
We apply Lindeberg-Feller to $S_1$. We check:
\begin{enumerate}
\item
For any $\epsilon >0$
\begin{equation}
\label{s1-1}
\lim_{n\to\infty}\sum_{|m|\leq [r(n)\sqrt{n}]}E[\bar{U}_m^2\mathbf{1}\{|\bar{U}_m|\geq\epsilon\}]=0
\end{equation}
and,
\item
\begin{equation}
\label{s1-2}
\lim_{n\to\infty}\sum_{|m|\leq [r(n)\sqrt{n}]}E[\bar{U}_m^2]=\sigma^2.
\end{equation}
\end{enumerate}

$|U_m(t,s)|\leq n^{-1/4}\eta_0^n(m)+n^{-1/4}\rho_0^n(m)$.
By uniform bound on moments (\ref{momentassmpn}), we get
\[|\bar{U}_m|\leq Cn^{-1/4}[\eta_0^n(m)+1].\]
By choice of $r(n)$ we get (\ref{s1-1}):\[\lim_{n\to\infty}\sum_{|m|\leq [r(n)\sqrt{n}]}E[\bar{U}_m^2{\bf 1}\{|\bar{U}_m|\geq\epsilon\}]=0.\]
We prove (\ref{s1-2}) from the lemma below.
\[\sum_{|m|\leq r(n)\sqrt{n}}E\bar{U}_m^2=\sum_{1\leq i,j\leq N}\theta_i\theta_j\sum_{|m|\leq r(n)\sqrt{n}}E[U_m(t_i,r_i)U_m(t_j,r_j)].\]
Let
\[S=\sum_{|m|\leq [r(n)\sqrt{n}]}E[U_m(s,q)U_m(t,r)].\]\\

\begin{lemma}
\begin{align*}\lim_{n\to\infty}S
=\rho_0\Gamma_q^{(1)}((s,q),(t,r))+v_0\Gamma_0^{(2)}((s,q),(t,r)).
\end{align*}
\end{lemma}
\begin{proof}
\begin{align*}&E[U_m(s,q)U_m(t,r)]\\&=n^{-1/2}\Cov\biggl[\sum_{j=1}^{\eta_0^n(m)}\bigl\{\mathbf{1} \{A^{s,q}_{m,j}\}\mathbf{1} \{m>[q\sqrt{n}]\}-\mathbf{1} \{(A^{s,q}_{m,j})^c\}\mathbf{1}\{m\leq [q\sqrt{n}]\}\bigr\}\ ,
\\
&\qquad \qquad \qquad  \sum_{j=1}^{\eta_0^n(m)}\bigl\{\mathbf{1} \{A^{t,r}_{m,j}\}\mathbf{1} \{m>[r\sqrt{n}]\}-\mathbf{1} \{(A^{t,r}_{m,j})^c\}\mathbf{1} \{m\leq [r\sqrt{n}]\}\bigr\}\biggr].\end{align*}

Assume ${z_i}$ are i.i.d. random variables independent of the random nonnegative integer $K$, then
\[\Cov[\sum_{i=1}^Kf(z_i),\sum_{i=1}^Kg(z_i)]=EK\cdot\Cov[f(z_1),g(z_1)]+\Var K\cdot Ef(z_1)\cdot Eg(z_1).\]
For fixed $m$, take $K=\eta_0^n(m)$,
\begin{equation*}f(X_{m,j})=\mathbf{1} \{A^{s,q}_{m,j}\}\mathbf{1} \{m>[q\sqrt{n}]\}-\mathbf{1} \{(A^{s,q}_{m,j})^c\}\mathbf{1} \{m\leq [q\sqrt{n}]\}\end{equation*}
and
\begin{equation*}g(X_{m,j})=\mathbf{1} \{A^{t,r}_{m,j}\}\mathbf{1} \{m>[r\sqrt{n}]\}-\mathbf{1} \{(A^{t,r}_{m,j})^c\}\mathbf{1} \{m\leq [r\sqrt{n}]\}.\end{equation*}
We now adopt the simpler notation \[A=\{X(ns)\leq [nvs]+[q\sqrt{n}]-m\}\] and \[B=\{X(nt)\leq [nvt]+[r\sqrt{n}]-m\}.\] 
A straightforward computation gives
\begin{equation*}\Cov[f(X_{m,1}),g(X_{m,1})]
=P(A)P(B^c)-P(A\cap B^c).
\end{equation*}
Next we compute
\begin{align*}Ef(X_{m,1})Eg(X_{m,1})=&P(A)P(B)\mathbf{1}\{m>[q\sqrt{n}]\}\mathbf{1}\{m>[r\sqrt{n}\}\\&+P(A^c)P(B^c)\mathbf{1}\{m\leq [q\sqrt{n}]\}\mathbf{1}\{m\leq [r\sqrt{n}\}\\&-P(A)P(B^c)\mathbf{1}\{m>[q\sqrt{n}]\}\mathbf{1}\{m\leq [r\sqrt{n}\}\\&-P(A^c)P(B)\mathbf{1}\{m\leq [q\sqrt{n}]\}\mathbf{1}\{m>[r\sqrt{n}\}.\end{align*}

Putting these computations together we get 
\begin{align*}S=&n^{-1/2}\sum_{|m|\leq [r(n)\sqrt{n}]}\biggl\{\rho_0^n(m)\bigl\{P(A) P(B^c)-P(A\cap B^c)\bigr\}\\&+v_0^n(m)\biggl[P(A)P(B)\mathbf{1}\{m>[q\sqrt{n}]\vee[r\sqrt{n}]\}+P(A^c)P(B^c)\mathbf{1}\{m\leq [q\sqrt{n}]\wedge[r\sqrt{n}\}\\&-P(A)P(B^c)\mathbf{1}\{[q\sqrt{n}]<m\leq [r\sqrt{n}\}-P(A^c)P(B)\mathbf{1}\{[r\sqrt{n}]<m\leq [q\sqrt{n}]\}\biggr]\biggr\}.\end{align*}
$X(nt)$ is a sum of Poisson($nt$) number of independent jumps $\xi_i$, each jump distributed according to $p(x)$.
Therefore, \[\Var(X(nt))=nt\cdot \Var(\xi_1)+(E\xi_1)^2\cdot (nt)=nt(\kappa_2-v^2)+v^2nt=nt\kappa_2.\] By Donsker's Invariance Principle the process $\{(X(nt)-[nvt])/\sqrt{n\kappa_2}:t\geq0\}$ converges weakly to standard 1-dimensional Brownian motion.
Therefore, as $n\to\infty$, assumption \eqref{initial_cond} and a Riemann sum argument together with the large deviation bounds on $X(nt)$ gives us
\begin{align*}&\lim_{n\to\infty}S=\rho_0\int_{-\infty}^\infty\{P(B(\kappa_2s)\leq q-x)P(B(\kappa_2t)>r-x)\\
&\qquad-P(B(\kappa_2s)\leq q-x,B(\kappa_2t)>r-x)\}dx\\
&+v_0\biggl\{\int_{q\vee r}^\infty P(B(\kappa_2s)\leq q-x)P(B(\kappa_2t)\leq r-x)dx\\
&\qquad+\int_{-\infty}^{q\wedge r}P(B(\kappa_2s)> q-x)P(B(\kappa_2t)>r-x)dx\\
&\qquad-\int_{q\wedge r}^{q\vee r}\bigl[{\bf 1}\{q<r\}P(B(\kappa_2s)\leq q-x)P(B(\kappa_2t)>r-x)\\
&\qquad +{\bf 1}\{r<q\}P(B(\kappa_2s)> q-x)P(B(\kappa_2t)\leq r-x)\bigr]\biggr\}\\
&=\rho_0\Gamma_q^{(1)}((s,q),(t,r))+v_0\Gamma_0^{(2)}((s,q),(t,r)).\end{align*}
The reasoning behind the convergence of $S$ to the above limit is the same as in Lemmas 4.3 and 4.4 of \cite{SEPP}. The reader is referred to \cite{SEPP} (page 778) for a more detailed explanation.

\end{proof}

This verifies (\ref{s1-2}):
\begin{align*}\lim_{n\to\infty}\sum_{|m|\leq r(n)\sqrt{n}}E[\bar{U}_m^2]=\sum_{1\leq i,j\leq N}\theta_i\theta_j\bigl\{\rho_0\Gamma_q^{(1)}((t_i,r_i),(t_j,r_j))+v_0\Gamma_0^{(2)}((t_i,r_i),(t_j,r_j))\bigr\}.\end{align*}

We can now conclude from Lindeberg-Feller that $S_1$ converges to mean-zero normal distribution with 
\begin{align*}\sigma^2&=\sum_{1\leq i,j\leq N}\theta_i\theta_j\bigl\{\rho_0\Gamma_q^{(1)}((t_i,r_i),(t_j,r_j))+v_0\Gamma_0^{(2)}((t_i,r_i),(t_j,r_j))\bigr\}.\end{align*}
\end{proof}

\begin{lemma}
\label{meanvanish}
\[\lim_{n\to\infty} \sup_{0\leq t\leq T,0\leq |r|\leq S} n^{-1/4}|EY_n(t,r)|=0.\]
\end{lemma}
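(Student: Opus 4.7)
Compute $EY_n(t,r)$ exactly, subtract off a ``constant density $\rho_0$'' approximation that will be $O(1)$, and estimate the residual via Abel summation combined with the averaging hypothesis~\eqref{initial_cond}. Writing the random walk started at $m$ as $m+X(nt)$ with $X(\cdot)$ an origin-started walk, changing variables via $k=m-[r\sqrt{n}]$, and setting $W=X(nt)-[nvt]$ (integer valued, $EW\in[0,1)$, $\Var W=\kappa_2 nt$), we obtain
\begin{equation*}
EY_n(t,r)=\sum_{k\geq 1}\rho_0^n([r\sqrt{n}]+k)P(W\leq -k)-\sum_{k\leq 0}\rho_0^n([r\sqrt{n}]+k)P(W>-k).
\end{equation*}
Decompose $\rho_0^n=\rho_0+\tilde\rho_0^n$. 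Using $\sum_{k\geq 1}P(W\leq -k)=E[(-W)^+]$ and $\sum_{j\geq 0}P(W>j)=E[W^+]$, the constant-$\rho_0$ piece collapses to $-\rho_0 EW=-\rho_0(nvt-[nvt])$, bounded by $\rho_0$ and thus negligible after $n^{-1/4}$ normalization.

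For the $\tilde\rho_0^n$ residual, set $S_K=\sum_{k=1}^K\tilde\rho_0^n([r\sqrt{n}]+k)$ and $T_J=\sum_{j=0}^J\tilde\rho_0^n([r\sqrt{n}]-j)$. The tails $P(W\leq -K)$ and $P(W>J)$ decay super-polynomially by the large-deviation bounds~\eqref{ld_bounds_rw1}--\eqref{ld_bounds_rw2}, while $|\tilde\rho_0^n|$ is uniformly bounded by the twelfth-moment assumption (via Jensen's inequality). Hence Abel summation applies with vanishing boundary terms and gives
\begin{equation*}
EY_n(t,r)+\rho_0(nvt-[nvt])=\sum_{K\geq 1}S_K\,P(W=-K)-\sum_{J\geq 0}T_J\,P(W=J+1).
\end{equation*}
It therefore suffices to show $\sum_K|S_K|P(W=-K)=o(n^{1/4})$ uniformly in $(t,r)\in[0,T]\times[-S,S]$; the $T_J$ sum is symmetric.

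Split at the threshold $M=K_0\sqrt{n\log n}$. For $1\leq K\leq M$, every index $[r\sqrt{n}]+k$ stays in $|m|\leq A\sqrt{n\log n}$ with $A=S+K_0+1$, so one may block $\{1,\dots,K\}$ into runs of length $L(n)$ and apply Assumption~\ref{assmpn3} to each block. Writing $K=qL(n)+s$ yields $|S_K|\leq\epsilon_n K n^{-1/4}+CL(n)$ with $\epsilon_n\to 0$ uniformly in $r$. Combined with $\sum_K K\,P(W=-K)\leq E|W|=O(\sqrt{n})$ and $\sum_K P(W=-K)\leq 1$, the small-$K$ contribution is $O(\epsilon_n n^{1/4})+O(L(n))=o(n^{1/4})$, since $L(n)=o(n^{1/4})$. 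For $K>M$, combine $|S_K|\leq CK$ with \eqref{ld_bounds_rw1}--\eqref{ld_bounds_rw2}: the Gaussian portion contributes $C\int_M^\infty x\,e^{-cx^2/(nT)}dx=O(n^{1-cK_0^2/T})$, which is $o(n^{1/4})$ once $K_0$ is chosen so that $cK_0^2/T>3/4$, and the exponential-tail portion is much smaller. The $T_J$ sum is handled identically, and uniformity in $(t,r)$ is automatic since every estimate is uniform.

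The \textbf{main obstacle} is that the averaging hypothesis only delivers block-level cancellation of size $o(L(n)n^{-1/4})$, which must be amplified against the $O(\sqrt{n})$ spread of $W$ to yield the needed $o(n^{1/4})$ bound; picking the truncation threshold $M=K_0\sqrt{n\log n}$ simultaneously inside the window of validity of Assumption~\ref{assmpn3} and well beyond the typical scale of $W$ is what closes the argument.
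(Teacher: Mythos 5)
Your proof is correct and, as far as one can tell, follows essentially the same strategy as the cited reference (the paper itself only says the proof is ``similar to Lemma~4.6 of \cite{SEPP}'' and gives no details): exact computation of $EY_n(t,r)$ in terms of the centered walk $W=X(nt)-[nvt]$, reduction of the constant-density piece to $-\rho_0 EW=O(1)$, Abel summation against $\tilde\rho_0^n=\rho_0^n-\rho_0$, and then a split at the scale $\sqrt{n\log n}$ with block averaging from Assumption~\ref{assmpn3} inside and the large-deviation tail bounds \eqref{ld_bounds_rw1}--\eqref{ld_bounds_rw2} outside. The bookkeeping all checks out: $\sum_K K\,P(W=-K)\leq E|W|=O(\sqrt{n})$ absorbs the blocked $\epsilon_n n^{-1/4}$ factor into $o(n^{1/4})$, the leftover block contributes $O(L(n))=o(n^{1/4})$, and choosing $K_0$ large makes the Gaussian tail $O(n^{1-cK_0^2/T})=o(n^{1/4})$. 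The uniformity in $(t,r)$ is correctly handled since the window bound and the variance bound are both uniform on $[0,T]\times[-S,S]$.
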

\begin{proof}
The proof is similar to the proof of Lemma 4.6 (page 781-783) in \cite{SEPP}.
\end{proof}

Showing that 
\begin{align*}
\Gamma_q^{(1)}((s,q),(t,r))=\Psi_{\kappa_2(t+s)}(|q-r|)-
\Psi_{\kappa_2(|s-t|)}(|q-r|)=\Gamma_q((s,q),(t,r))
\end{align*}
and
$$\Gamma_0^{(2)}((s,q),(t,r))=\Gamma_0((s,q),(t,r))$$
is an exercise in calculus.

\begin{proof}[{\bf Proof of Proposition \ref{finite_dim}}]
By Lemmas \ref{s2}, \ref{s1} and \ref{meanvanish} we have, as $n\to\infty$, $n^{-1/4}\sum_{i=1}^N\theta_iY_n(t_i,r_i)$ converges to a mean-zero normal distribution with variance $\sigma^2$,
where \[\sigma^2=\sum_{1\leq i,j\leq N}\theta_i\theta_j\{\rho_0\Gamma_q((t_i,r_i),(t_j,r_j))+v_0\Gamma_0((t_i,r_i),(t_j,r_j))\},\]  for arbitrary $(\theta_1,...,\theta_N)\in \mathbf R^N$. We can therefore conclude that as $n\to\infty$ the vector $n^{-1/4}(Y_n(t_1,r_1),Y_n(t_2,r_2),...,Y_n(t_N,r_N))$ converges in distribution to the mean-zero Gaussian random vector $(Z(t_1,r_1),Z(t_2,r_2),...,Z(t_N,r_N))$ with covariance 
\[EZ(t_i,r_i)Z(t_j,r_j)=\rho_0\Gamma_q((t_i,r_i),(t_j,r_j))+v_0\Gamma_0((t_i,r_i),(t_j,r_j)).\]

\end{proof}

\numberwithin{equation}{section}
\section{\bf Tightness and completion of proof of Theorem 2.1}\label{sec:tightness}
In this section we first develop a criterion for tightness for processes in $D_2$. The tightness criterion is in terms of a modulus of continuity. We then proceed to check if our scaled current process satisfies the tightness criterion.
The following is an extension of Proposition 5.7 in \cite{Durr} to two-parameter processes. WLOG for simplicity we replace the region $[0,T]\times[-S,S]$ with the unit square $[0,1]^2$.  For any $h\in D_2=D_2([0,1]^2,\mathbb R)$, define \[w_h(\delta)=\sup_{\substack{s,t,q,r\in[0,1]\\|(s,q)-(t,r)|<\delta}}|h(s,q)-h(t,r)|.\]
\begin{prop}
\label{criterion}
Suppose \{$X_n$\} is a sequence of random elements of $D_2=D_2([0,1]^2,\mathbb R)$ satisfying these conditions:\\
For all n there exists $\delta_n>0$ such that
\begin{enumerate}
  \item there exist $\beta>0$, $\sigma>2$ , and $C>0$ such that for all n sufficiently large
\begin{equation}
\label{momentbound}
 E(|X_n(s,q)-X_n(t,r)|^\beta)\leq C|(s,q)-(t,r)|^\sigma
\end{equation}
 for all $s,t,q,r\in [0,1]$ with $|(s,q)-(t,r)|>\delta_n$, and
  \item for every $\epsilon>0$ and $\eta>0$ there exists an $n_0$ such that
\begin{equation}
\label{tightmodofcont}
  P(w_{X_n}(\delta_n)>\epsilon)<\eta  \text{  for all } n\geq n_0.
\end{equation}
\end{enumerate}
Then, for each $\epsilon>0$ and $\eta>0$, there exists a $\delta$, $0<\delta<1$, and an integer $n_0$, such that \[P(w_{X_n}(\delta)\geq \epsilon)\leq \eta, \text{ for  }n\geq n_0.\]
\end{prop}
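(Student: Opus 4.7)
The plan is to extend the chaining argument of Proposition 5.7 of \cite{Durr} from one parameter to two, using assumption (2) to absorb scales $\le\delta_n$ and assumption (1) plus a standard Kolmogorov-type chaining to handle the larger scales. Fix $\epsilon,\eta>0$. For each $k\ge 0$ let $D_k=\{(i2^{-k},j2^{-k}):0\le i,j\le 2^k\}$ and for $(s,q)\in[0,1]^2$ let $\pi_k(s,q)$ be a nearest point of $D_k$. Then $|\pi_k(s,q)-(s,q)|\le 2^{-k}/\sqrt{2}$ and $|\pi_k(s,q)-\pi_{k-1}(s,q)|\le 3\cdot 2^{-k}/\sqrt{2}$, so for $\delta_n$ small I can pick an integer $M=M(n)$ with $\delta_n/\sqrt{2}<2^{-M}\le\sqrt{2}\,\delta_n$, ensuring simultaneously that $|\pi_M(s,q)-(s,q)|\le\delta_n$ and $|\pi_M(s,q)-\pi_{M-1}(s,q)|>\delta_n$. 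Given $\delta$ with $\delta_n<\delta<1$, let $k_0=k_0(\delta)$ be the largest integer with $2^{-k_0}\ge\delta$.

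For any pair $(s,q),(t,r)\in[0,1]^2$ with $|(s,q)-(t,r)|<\delta$, I would telescope
\begin{align*}
X_n(s,q)-X_n(t,r)&=\bigl[X_n(s,q)-X_n(\pi_M(s,q))\bigr]-\bigl[X_n(t,r)-X_n(\pi_M(t,r))\bigr]\\
&\quad+\sum_{k=k_0+1}^{M}\bigl[X_n(\pi_k(s,q))-X_n(\pi_{k-1}(s,q))\bigr]\\
&\quad-\sum_{k=k_0+1}^{M}\bigl[X_n(\pi_k(t,r))-X_n(\pi_{k-1}(t,r))\bigr]\\
&\quad+\bigl[X_n(\pi_{k_0}(s,q))-X_n(\pi_{k_0}(t,r))\bigr].
\end{align*}
The first two brackets are each bounded by $w_{X_n}(\delta_n)$ and are therefore $<\epsilon/5$ with probability at least $1-\eta/3$ for all $n$ large enough, by assumption (2).

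For the chaining sums, at each scale $k_0<k\le M$ the grid $D_k$ has $O(2^{2k})$ points, each with a bounded number of possible values of $\pi_{k-1}$. Distances $|\pi_k-\pi_{k-1}|$ exceed $\delta_n$ by the choice of $M$, so assumption (1) applies; Markov's inequality and a union bound give
\[
P\Bigl(\max_{(a,b)\in D_k}|X_n(a,b)-X_n(\pi_{k-1}(a,b))|>\epsilon_k\Bigr)\le C\,2^{2k}\,\epsilon_k^{-\beta}\,(2^{-k})^{\sigma}.
\]
Choosing $\epsilon_k=c\,2^{-k\alpha}$ with $0<\alpha<(\sigma-2)/\beta$ makes both $\sum_k\epsilon_k<\infty$ and the sum of the above probabilities a convergent geometric series, so the two chaining sums together contribute at most $\epsilon/5$ to $|X_n(s,q)-X_n(t,r)|$ with probability at least $1-\eta/3$, uniformly in $n$. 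The last bracket is at most $\max|X_n(a,b)-X_n(c,d)|$ over the $O(\delta^{-2})$ pairs on $D_{k_0}$ within distance $C_1\delta$, and a further Markov plus union bound gives probability at most $C\epsilon^{-\beta}\delta^{\sigma-2}\to 0$ as $\delta\downarrow 0$ since $\sigma>2$. Combining the three pieces yields $P(w_{X_n}(\delta)>\epsilon)\le\eta$ for $\delta$ small and $n$ large, as required.

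The main obstacle is the bookkeeping around the threshold $\delta_n$: distances used in applications of (1) must strictly exceed $\delta_n$, while residuals controlled by (2) must not exceed $\delta_n$. Both requirements are reconciled by the single dyadic scale $M$ with $2^{-M}\asymp\delta_n$; the constant ratio $3$ between $|\pi_M-\pi_{M-1}|$ and $|\pi_M-(s,q)|$ is precisely what makes this dual condition simultaneously satisfiable. The degenerate case $\delta\le\delta_n$ reduces immediately to (2) via $w_{X_n}(\delta)\le w_{X_n}(\delta_n)$, so no separate argument is needed there.
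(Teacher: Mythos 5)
Your proposal is correct and follows essentially the same strategy as the paper: a two-scale chaining argument, with condition (2) absorbing the fluctuations at scales $\le\delta_n$ and condition (1) plus a dyadic Markov/union-bound chain (with geometrically decaying thresholds $\epsilon_k$, made summable by $\sigma>2$) handling scales above. The only substantive difference is bookkeeping: you telescope through projection maps $\pi_k$, whereas the paper first normalizes $\delta_n=2^{-k(n)}$, passes to the $\delta_n\mathbb{N}$ grid at cost $2w_{X_n}(\delta_n)$, and then uses its Lemma 4.1 to route between grid points with at most four dyadic moves per scale. One small slip: your assertion that $|\pi_M(s,q)-\pi_{M-1}(s,q)|>\delta_n$ does not hold in general -- the two projections can coincide (distance $0$), and at the transition level $k=M$ a nonzero distance can still be $\le\delta_n$ -- but the first case is vacuous and the second is trivially absorbed into the $w_{X_n}(\delta_n)$ term, so this is a cosmetic imprecision rather than a genuine gap, and it disappears entirely if you adopt the paper's WLOG reduction to $\delta_n$ a dyadic power.
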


To prove this proposition we require the following lemma.

\begin{lemma}
\label{sequence}
Let $0\leq k_0\leq k$. If points $(s,q)$ and $(t,r)$ lie on the $2^{-k}$ grid i.e. $s=\frac{i}{2^k}$, $q=\frac{j}{2^k}$, $t=\frac{i'}{2^k}$, $r=\frac{j'}{2^k}$, and  $|\frac{i}{2^k}-\frac{i'}{2^k}|\leq 2^{-k_0}$, $|\frac{j}{2^k}-\frac{j'}{2^k}|\leq 2^{-k_0}$, then 
\begin{enumerate}
\item
it is possible to move from $(s,q)$ to $(t,r)$ in steps of size $2^{-h}$, $k_0\leq h\leq k$, moving one co-ordinate at a time, where a step of size $2^{-h}$, for any h, occurs at most 4 times;
\item
also, we can choose our steps in such a way that we make a jump of size $2^{-h}$ only if we lie in the $2^{-h}$ grid.
\end{enumerate}
\end{lemma}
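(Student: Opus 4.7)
The plan is to reduce to a one-dimensional dyadic coarsening procedure applied synchronously to both coordinates, so that the 2D grid condition is preserved throughout. I work in integer units $i_1 = 2^k s$, $i_2 = 2^k q$, $i_1' = 2^k t$, $i_2' = 2^k r$, so $|i_\ell - i_\ell'| \le 2^{k-k_0}$ for $\ell = 1, 2$, and \emph{lying on the $2^{-h}$ grid} translates to both $i_\ell$ being divisible by $2^{k-h}$.

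Set $L := 2^{k - k_0 + 1}$. Since $|i_\ell - i_\ell'| < L$ for each $\ell$, the two nearest multiples of $L$ to $i_\ell$ and the two nearest multiples of $L$ to $i_\ell'$ share at least one common element; pick $m_\ell$ in that intersection, so $L \mid m_\ell$ for both $\ell$. The path will consist of a coarsening phase from $(i_1, i_2)$ to the common pivot $(m_1, m_2)$, followed by the time-reversal of a coarsening of $(i_1', i_2')$ to $(m_1, m_2)$.

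The coarsening phase processes scales $j = 0, 1, \ldots, k - k_0$ in increasing order. At scale $j$, for each coordinate $\ell$: if bit $j$ of the current value of coord $\ell$ is on, step coord $\ell$ by $\pm 2^j$, choosing the sign to flip bit $j$ while steering toward $m_\ell$. The key invariant, proved by induction on $j$, is that after processing scale $j$ both coordinates are multiples of $2^{j+1}$; hence at the start of scale $j$ both coordinates lie on the $2^{-(k-j)}$ grid and each scale-$j$ step is legal. The set of terminals reachable from $i_\ell$ by this coarsening is exactly the two nearest multiples of $L$, so the sign choices can be arranged to land precisely on $m_\ell$. The reverse phase from $(m_1, m_2)$ to $(i_1', i_2')$ processes scales in decreasing order under the symmetric invariant.

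Counting: each scale $j$ contributes at most two steps per phase (one per coordinate), so each step size $2^{-(k-j)}$ is used at most four times in total, and $j \in \{0, \ldots, k - k_0\}$ matches $h \in \{k_0, \ldots, k\}$ exactly. The main obstacle I anticipate is precisely the 2D grid condition: a naive one-dimensional pivot decomposition applied to each coordinate independently fails whenever the non-moving coordinate has low $2$-adic valuation, since that blocks the large steps required by the moving coordinate. The synchronized coarsening-to-a-common-pivot recipe sidesteps this by ensuring that both coordinates simultaneously acquire the divisibility needed for each subsequent scale.
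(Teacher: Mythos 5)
Your proposal is correct and takes essentially the same approach as the paper's: coarsen both endpoints dyadically, one scale at a time starting from the finest, so that both coordinates simultaneously acquire the divisibility needed for each subsequent jump, and meet at a common coarse-grid pivot. The paper organizes this as an induction on $k$ (shift both points onto the $2^{-(k-1)}$ grid in at most four jumps of size $2^{-k}$, then recurse down to $k_0$), whereas you fix the pivot $(m_1,m_2)$ on the $L=2^{k-k_0+1}$ grid upfront and run the coarsening iteratively via bit-flipping; these are the same idea in recursive versus iterative form.
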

\begin{proof}
We fix $k_0$ and prove the lemma by induction on $k$. When $k=k_0$,
$(s,q)$, $(t,r)$ $\in 2^{-k_0}\mathbb N\times 2^{-k_0}\mathbb N$.
We are given that  $|\frac{i}{2^{k_0}}-\frac{i'}{2^{k_0}}|\leq 2^{-k_0}$, $|\frac{j}{2^{k_0}}-\frac{j'}{2^{k_0}}|\leq 2^{-k_0}.$
So, either both points coincide or there is a difference of $2^{-k_0}$ in one co-ordinate or both co-ordinates between these points.
We can therefore move from $(s,q)$ to $(t,r)$ in at most 2 steps, each of size $2^{-k_0}$, moving one co-ordinate at a time.
Clearly condition (2) also holds as the only jumps possible here are of size $2^{-k_0}$ and we lie in the $2^{-k_0}$ grid.

Let $k>k_0$,
$(s,q)=(\frac{i}{2^k},\frac{j}{2^k})$, $(t,r)=(\frac{i'}{2^k},\frac{j'}{2^k})$ ,$|\
\frac{i}{2^k}-\frac{i'}{2^k}|\leq 2^{-k_0}$ and $|\frac{j}{2^k}-\frac{j'}{2^k}|\leq 2^{-k_0}$. 
Either the points $(s,q)$ and $(t,r)$ already lie on the $2^{-(k-1)}$ grid, or if not, the points $(s,q)$ and $(t,r)$ are each at most two jumps of size $2^{-k}$ away from the $2^{-(k-1)}$-grid. We can therefore move both $(s,q)$ and $(t,r)$ closer to each other and onto the $2^{-(k-1)}$-grid in at most four jumps of size $2^{-k}$.

By the induction hypothesis and by our choice of jumps, we are done.
\end{proof}

\begin{proof}[{\bf Proof of Proposition \ref{criterion}}]
We may assume without loss of generality that $\delta_n=2^{-k}$ for some $k=k(n)\geq 0$ depending on $n$. This is because if we have $P(w_{X_n}(\delta_n)>\epsilon)<\eta$ for some $\delta_n>0$, then we can find a $k(n)>0$ with $2^{-k(n)-1}<\delta_n<2^{-k(n)}$ such that $w_{X_n}(2^{-k(n)})\leq 2w_{X_n}(\delta_n)$. So $P(w_{X_n}(2^{-k(n)})>2\epsilon)\leq P(w_{X_n}(\delta_n)>\epsilon)<\eta$. Therefore it is sufficient to prove the theorem for $\delta_n=2^{-k(n)}$ for $k(n)\geq 0$.

Given $n$ and $\delta$, if $\delta\leq \delta_n$, then $w_{X_n}(\delta)\leq w_{X_n}(\delta_n)$. 
If $\delta_n<\delta$, then for any two points $(s,q), (t,r)$ with $0\leq |t-s|,|q-r|\leq \delta$, we have
\[|h(s,q)-h(t,r)|\leq |h(s,q)-h(s',q')|+|h(t,r)-h(t',r')|+|h(s',q')-h(t',r')|,\]
where $s',t',q',r'\in \delta_n\mathbb N$, $0\leq |s'-t'|,|q'-r'|\leq \delta$ and $0\leq |t-t'|,|s-s'|,|r-r'|,|q-q'|\leq \delta_n$. Thus for any $\delta_n<\delta$,
\[ w_h(\delta)\leq2w_h(\delta_n)+\sup_{\substack{t,s,q,r\in (\delta_n\mathbb N)\bigcap [0,1]\\ |t-s|,|r-q|\leq \delta}}|h(s,q)-h(t,r)|.\]
 Therefore by (\ref{tightmodofcont}), we only need to show that for any $\epsilon>0$ and $\eta>0$, there exists a $\delta>0$ such that for all $n$ sufficiently large
 \begin{equation}
 \label{modofcont-n}
  P(w_{X_n}^{(n)}(\delta)>\epsilon)<\eta,
 \end{equation}
where $$w_h^{(n)}(\delta)=\sup_{\substack{t,s,q,r\in(\delta_n\mathbb N)\bigcap [0,1]\\ |t-s|,|r-q|\leq \delta}}|h(s,q)-h(t,r)|.$$
This follows from (\ref{momentbound}) by the following ``dyadic argument'':\\
Choose a $\lambda$ such that $2^{(2-\sigma)}<\lambda^\beta<1.$ Given $n$ for which (\ref{momentbound}) is satisfied, let
\begin{align*}G_{k}=\bigl\{&\bigl|X_n(\tfrac{i}{2^k},\tfrac{j}{2^{k}})-X_n(\tfrac{i+1}{2^{k}},\tfrac{j}{2^{k}})\bigr|\leq \lambda^{k} \text{ for }i=0,1,\ldots,2^{k}-1,\\&j=0,1,\ldots,2^{k}\bigr\}\end{align*}
and
\begin{align*}H_{k}=\bigl\{&\bigl|X_n(\tfrac{i}{2^k},\tfrac{j}{2^{k}})-X_n(\tfrac{i}{2^{k}},\tfrac{j+1}{2^{k}})\bigr|\leq \lambda^{k} \text{ for }i=0,1,\ldots,2^{k},\\&j=0,1,\ldots,2^{k}-1\bigr\}\end{align*}
where $k\leq k(n)$ .
\begin{align*}
P(G_{k}^c\cup H_{k}^c)&\leq 2^{k}(2^{k}+1)\lambda^{-k\beta}2^{-k\sigma}+2^{k}(2^{k}+1)\lambda^{-k\beta}2^{-k\sigma}
&\intertext{by Markov inequality and (\ref{momentbound}) as $k\leq k(n)$} 
&\leq c(2^{(2-\sigma)}\lambda^{-\beta})^{k}\\
&=c\gamma^{k}\text{   where }\gamma=2^{(2-\sigma)}\lambda^{-\beta}<1.
\end{align*}

Given $\epsilon>0$ and $\eta>0$, choose $k_0$ such that\\
\[ c\sum_{k\geq k_0}\gamma^{k}<\eta \text{ and }
4\sum_{k\geq k_0}\lambda^{k}<\epsilon.\]
Choose $\delta=2^{-k_0}$.
If $\delta<\delta_n$ for some $n\geq n_0$, then $w_{X_n}(\delta)<w_{X_n}(\delta_n)$ and we have $P(w_{X_n}(\delta)>\epsilon)<\eta$ by (\ref{tightmodofcont}).
A little more work is required to show that (\ref{modofcont-n}) holds in the $\delta_n\leq \delta$, i.e. $k_0\leq k(n)$, case.
Pick any $(s,q),(t,r)\in \delta_n\mathbb N\times\delta_n\mathbb N$ where $\delta_n=2^{-k(n)}$, such that $|s-t|,|q-r|<\delta$.
 We can find a sequence of points $(s,q)=(s_1,q_1),(s_2,q_2)...(s_m,q_m)=(t,r)$ (refer to lemma \ref{sequence}) such that on the event $\bigcap_{k_0\leq k\leq k(n)}(G_{k}\cap H_{k})$
we have
\[|X_n(s,q)-X_n(t,r)|\leq\sum_{i=1}^{m-1}|X_n(s_i,q_i)-X_n(s_{i+1},q_{i+1})|\leq 4\sum_{k\geq k_0}\lambda^{k}<\epsilon.\]
Now
\begin{equation*}
P\bigl(\bigcup_{k_0\leq k\leq k(n)}(G_{k}^c\cup H_{k}^c)\bigr)\leq c\sum_{k_0\leq k\leq k(n)}\gamma^{k}<\eta .\end{equation*}
 Therefore,
\[ P(w_{X_n}^{(n)}(\delta)\leq \epsilon)\geq P(\bigcap_{k_0\leq k\leq k(n)}(G_k\cap H_k))\geq 1-\eta .\]
Thus (\ref{modofcont-n}) is satisfied with $\delta=2^{-k_0}$.
\end{proof}

We now apply Proposition \ref{criterion} to processes $X_n=n^{-1/4}\bar{Y}_n(t,r)$ where $\bar{Y}_n(t,r)=Y_n(t,r)-EY_n(t,r)$. 
\subsection{Verifying the first tightness condition} 
We check that (\ref{momentbound}) holds for $n^{-1/4}\bar{Y}_n(t,r)$.
Let $\alpha>0$ and  \begin{equation}
\label{alpha}
5/4+\alpha<\beta<3/2.
\end{equation}
We show that there exist constants $\sigma>2$ and $0<C<\infty$ independent of $n$, such that with $\delta_n=n^{-\beta}$, for all $n$ (sufficiently large)
\begin{equation}
E\left(n^{-1/4}|\bar Y_n(s,q)-\bar{Y}_n(t,r)|\right)^{12}\leq C|(s,q)-(t,r)|^\sigma
\end{equation}
for all $(s,q),(t,r)\in [0,T]\times [-S,S] $ with $|(s,q)-(t,r)|>\delta_n$.

We can assume WLOG that $s\leq t$ in the following calculations.
Define \[A_{m,j}=\left\{ X_{m,j}(ns)\leq [q\sqrt{n}\ ]+[nvs],X_{m,j}(nt)>[r\sqrt{n}\ ]+[nvt] \right\}\] and 
\[B_{m,j}=\left\{X_{m,j}(ns)> [q\sqrt{n}\ ]+[nvs],X_{m,j}(nt)\leq [r\sqrt{n}\ ]+[nvt]\right\}.\]
If $q\leq r$ then
\begin{align}
Y_n(s,q)-Y_n(t,r)=&\sum_{m>[r\sqrt{n}]}\sum_{j=1}^{\eta_0^n(m)}{\bf 1}\{X_{m,j}(ns)\leq [q\sqrt{n}]+[nvs]\}\label{e1}\\
&+\sum_{m=[q\sqrt{n}]+1}^{[r\sqrt{n}]}\sum_{j=1}^{\eta_0^n(m)}{\bf 1}\{X_{m,j}(ns)\leq [q\sqrt{n}]+[nvs]\}\label{e2}\\
&-\sum_{m\leq [q\sqrt{n}]}\sum_{j=1}^{\eta_0^n(m)}{\bf 1}\{X_{m,j}(ns)>[q\sqrt{n}]+[nvs]\}\label{e3}\\
&-\sum_{m>[r\sqrt{n}]}\sum_{j=1}^{\eta_0^n(m)}{\bf 1}\{X_{m,j}(nt)\leq [r\sqrt{n}]+[nvt]\}\label{e4}\\
&+\sum_{m=[q\sqrt{n}]+1}^{[r\sqrt{n}]}\sum_{j=1}^{\eta_0^n(m)}{\bf 1}\{X_{m,j}(nt)>[r\sqrt{n}]+[nvt]\}\label{e5}\\
&+\sum_{m\leq [q\sqrt{n}]}\sum_{j=1}^{\eta_0^n(m)}{\bf 1}\{X_{m,j}(nt)>[r\sqrt{n}]+[nvt]\}\label{e6}
\end{align}
Combining (\ref{e1}) and (\ref{e4}), (\ref{e2}) and (\ref{e5}), (\ref{e3}) and (\ref{e6}) and adding and subtracting 
\[\sum_{m=[q\sqrt{n}]+1}^{[r\sqrt{n}]}\sum_{j=1}^{\eta_0^n(m)}{\bf 1}\{X_{m,j}(ns)>[q\sqrt{n}]+[nvs],X_{m,j}(nt)\leq [r\sqrt{n}]+[nvt]\}\] we get
\begin{equation*}
\bar{Y}_n(s,q)-\bar{Y}_n(t,r)=\sum_{m\in\mathbb Z }G_m+\sum_{m=[q\sqrt{n}\ ]+1}^{[r\sqrt{n}\ ]}\bigl[\eta_0^n(m)-\rho_0^n(m)\bigr]
\end{equation*}
where
\begin{equation}
G_m=\sum_{j=0}^{\eta_0^n(m)}({\bf 1}_{A_{m,j}}-{\bf 1}_{B_{m,j}})-\rho_0^n(m)( P(A_{m,1})- P(B_{m,1})).
\end{equation}
Similarly, when $q>r$
\begin{equation*}
\bar{Y}_n(s,q)-\bar{Y}_n(t,r)=\sum_{m\in\mathbb Z }G_m-\sum_{m=[r\sqrt{n}\ ]+1}^{[q\sqrt{n}\ ]}\bigl[\eta_0^n(m)-\rho_0^n(m)\bigr].
\end{equation*}
Using the identity $(a+b)^k\leq 2^k(a^k+b^k)$, we get
\begin{equation}
\label{b}
 E\left(n^{-1/4}(\bar{Y}_n(s,q)-\bar{Y}_n(t,r))\right)^{12}\leq \frac{2^{12}}{n^3}\left( EA^{12}+ EB^{12}\right)
\end{equation}
where
\[A=\sum_{m\in\mathbb Z }G_m\]
and \[B=\sum_{m=[(r\wedge q)\sqrt{n}\ ]+1}^{[(r\vee q)\sqrt{n}\ ]}\bigl[\eta_0^n(m)-\rho_0^n(m)\bigr].\]
We now bound $E[A^{12}]$ and $E[B^{12}]$.

To bound $EB^{12}$, we use the following lemma which is a slight modification of Lemma 8 in \cite{MS}.
\begin{lemma}
\label{eta}
Let $Y_i$ be independent random variables with $ E[|Y_i|^{2r}]<c<\infty$ and $ EY_i=0$ for all $i$ and for some fixed $r>0$. There is a constant $C<\infty$ such that, for any n ,
 \[ E\biggl\{\bigl(Y_1+Y_2+\cdots+Y_n\bigr)^{2r}\biggr\}\leq C(2r)!n^r.\]
\end{lemma}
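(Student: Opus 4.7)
The plan is a direct multinomial expansion, combined with the mean-zero assumption and Lyapunov's inequality. I would begin by writing
\begin{equation*}
E\bigl[(Y_1+\cdots+Y_n)^{2r}\bigr]=\sum_{k_1+\cdots+k_n=2r}\binom{2r}{k_1,\ldots,k_n}\prod_{i=1}^{n}E[Y_i^{k_i}],
\end{equation*}
where the sum runs over nonnegative integers summing to $2r$ and independence was used to factor the expectation. Because $EY_i=0$, any multi-index with some $k_i=1$ contributes zero; in the surviving terms each positive $k_i$ is at least $2$, and hence at most $r$ of the $k_i$ are nonzero.

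For the surviving terms I would bound the multinomial coefficient trivially by $(2r)!$ and estimate each moment via Lyapunov's inequality,
\begin{equation*}
|E[Y_i^{k_i}]|\le E[|Y_i|^{k_i}]\le\bigl(E[|Y_i|^{2r}]\bigr)^{k_i/(2r)}\le c^{k_i/(2r)},
\end{equation*}
so that, using $\sum_i k_i=2r$,
\begin{equation*}
\prod_{i=1}^{n}|E[Y_i^{k_i}]|\le c^{(\sum_i k_i)/(2r)}=c.
\end{equation*}
Thus each nonvanishing term contributes at most $(2r)!\,c$.

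It remains to count the surviving multi-indices. If exactly $\ell$ of the $k_i$ are positive, then $\ell\le r$ and the number of ways to choose which $\ell$ positions carry the positive exponents is $\binom{n}{\ell}\le n^\ell\le n^r$. The number of ways to split $2r$ into $\ell$ ordered parts of size at least $2$ depends only on $r$, so the total number of surviving multi-indices is at most $N_r\cdot n^r$ for a constant $N_r$ depending only on $r$. Combining the estimates yields
\begin{equation*}
E\bigl[(Y_1+\cdots+Y_n)^{2r}\bigr]\le c\,N_r\,(2r)!\,n^r,
\end{equation*}
which is the claimed bound with $C:=c\,N_r$.

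The only point that truly matters is the reduction via $EY_i=0$: this is what forces each surviving exponent to be at least $2$ and thereby replaces the naive $n^{2r}$ by $n^r$. The remaining combinatorics (bounding the multinomial coefficient by $(2r)!$ and counting the surviving multi-indices by $N_r\, n^r$) are elementary, and the dependence on $r$ and $c$ is absorbed into the constant $C$.
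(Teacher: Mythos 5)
Your proof is correct, and it shares the same skeleton as the paper's: expand by the multinomial theorem, use $EY_i=0$ to discard any term with some exponent equal to $1$, observe that all surviving exponents are $\geq 2$ so at most $r$ of them are nonzero, and bound the product of moments by $c$ via Lyapunov/H\"older. Where you diverge is in the counting step. The paper does not bound the multinomial coefficient by $(2r)!$ and then count multi-indices separately; instead it isolates the quantity $A=\sum'(r_1!\cdots r_n!)^{-1}$, identifies it as the coefficient of $x^{2r}$ in the generating function $F_n(x)=\bigl(\sum_{j\geq 0,\,j\neq 1}x^j/j!\bigr)^n$, and bounds it by $F_n(x)/x^{2r}$ evaluated at $x=n^{-1/2}$, which yields $A\leq(1+1/n)^n\,n^r\leq e\,n^r$ in one stroke. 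Your route is more elementary and transparent: bound each $\binom{2r}{k_1,\ldots,k_n}$ crudely by $(2r)!$, then count the surviving multi-indices combinatorially as $\sum_{\ell\le r}\binom{n}{\ell}\cdot(\text{compositions of }2r\text{ into }\ell\text{ parts}\ge 2)\le N_r\,n^r$. Both give $O(n^r)$; the generating-function argument produces a cleaner constant ($e$ rather than an $r$-dependent $N_r$) by exploiting the $1/k_i!$ factors you throw away, but for the purposes of the lemma, where only existence of a finite $C$ is needed, the two are equally good.
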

\begin{proof}
Since $EY_i=0$,
\[E\biggl\{\biggl(Y_1+Y_2+\cdots+Y_n\biggr)^{2r}\biggr\}={\sum}'\frac{(2r)!}{r_1!r_2!\cdots r_n!} EY_1^{r_1} EY_2^{r_2}\cdots EY_n^{r_n}\]
where $\sum '$ extends over all n-tuples of integers $r_1,r_2,\hdots r_n\geq 0$ such that each $r_i\neq 1$ and $r_1+\cdots +r_n=2r$.\\
$| EY_1^{r_1} EY_2^{r_2}\cdots EY_n^{r_n}|\leq c$ by the bounded moment assumption and H\"older's inequality.
The number $A=\sum '(r_1!\cdots r_n!)^{-1}$ is the coefficient of $x^{2r}$ in
$$F_n(x)=\bigl(\sum_{\substack{j\geq 0\\ j\neq 1}}\frac{x^j}{j!}\bigr)^n$$
and consequently $A\leq \frac{F_n(x)}{x^{2r}}$ for $x>0$. But
$\sum_{\substack{j\geq 0\\ j\neq 1}}\frac{x^j}{j!}\leq 1+x^2$
for $x\leq 1$, so taking $x=n^{-1/2}$ gives $A\leq n^r(1+(1/n))^n\leq Cn^r$.
\end{proof}

Recall that $\eta_0^n(x),x\in \mathbb Z$ are independent with mean $\rho_0^n(x)$. Let $C$ denote a constant that varies from line to line in the string of inequalities below.
Applying Lemma \ref{eta} to the term $E[B^{12}]$ in equation (\ref{b}) with $r=6$ and by the moment assumption on $\eta_0^n$, we get
\begin{equation}
\label{b1}
 E[B^{12}] \leq C(12)!|[r\sqrt{n}]-[q\sqrt{n}]|^6
 \leq C(|r-q|^6n^3+1).
\end{equation}

We use the following lemma to bound $E[A^{12}]$.
\begin{lemma}
There exists a constant C such that for each positive integer $1\leq k\leq 12$ and for all m,
$$ E[|G_m|^k]\leq D_m$$
where $D_m=C\{P(A_{m,1})+ P(B_{m,1})\}$.
\end{lemma}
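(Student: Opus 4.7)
The key algebraic observation is that the events $A_{m,j}$ and $B_{m,j}$ are disjoint, since they impose contradictory inequalities on $X_{m,j}(ns)$. Consequently $Z_j:=\mathbf{1}_{A_{m,j}}-\mathbf{1}_{B_{m,j}}\in\{-1,0,1\}$ satisfies
\[E|Z_j|^r=p:=P(A_{m,1})+P(B_{m,1})\le 1\qquad\text{for every }r\ge 1,\]
and $|EZ_1|\le p$. Writing $\mu:=EZ_1$ and $W_j:=Z_j-\mu$, one has $EW_j=0$, $|W_j|\le 2$, and $E|W_j|^r\le 2^r p$ for every $r\ge 1$. My plan is to split
\[G_m=\sum_{j=1}^{\eta_0^n(m)}W_j+\bigl[\eta_0^n(m)-\rho_0^n(m)\bigr]\mu=:S+T\]
and to bound each piece by $Cp$.

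The term $T$ is immediate: $|\mu|^k\le p$ (since $|\mu|\le p\le 1$) and Assumption \ref{assmpn3} gives $E|\eta_0^n(m)-\rho_0^n(m)|^k\le C$ uniformly in $n,m$ for $1\le k\le 12$, so $E|T|^k\le Cp$.

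For $S$ with even $k$ I would condition on $N:=\eta_0^n(m)$ and run the multinomial expansion already used in Lemma \ref{eta}, but with the sharper moment information on $W_j$. Since $EW_j=0$, only tuples $(r_1,\dots,r_N)$ with $\sum r_i=k$ and each $r_i\in\{0,2,3,\dots\}$ survive; such a tuple with exactly $\ell$ nonzero entries must have $\ell\le k/2$, the number of position choices is $\binom{N}{\ell}$, and the product of the nonzero moments is bounded by $(2^k p)^\ell$. Summing,
\[E\bigl[|S|^k\,\big|\,N\bigr]\le C_k\sum_{\ell=1}^{k/2}(Np)^{\ell}.\]
Taking expectations and using $EN^\ell\le C$ for $\ell\le 6$ together with $p^\ell\le p$ (since $p\le 1$) yields $E|S|^k\le C_k p$ for even $k\le 12$. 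Odd exponents $k$ then follow from Cauchy--Schwarz between the adjacent even moments, $E|S|^k\le(E|S|^{k-1})^{1/2}(E|S|^{k+1})^{1/2}\le C_k p$, with $k=1$ handled directly by $E|S|\le E[N]\cdot E|W_1|\le 2p\,E[N]\le Cp$. Combining with the bound on $T$ gives $E|G_m|^k\le Cp=D_m$.

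The only nontrivial step is this refinement of Lemma \ref{eta}: one must track the fact that every non-vanishing moment $EW_i^{r_i}$ with $r_i\ge 1$ is $O(p)$, rather than $O(1)$, so that a tuple with $\ell$ nonzero exponents contributes $O((Np)^{\ell})$ rather than $O(N^{k/2})$. Everything else reduces to the disjointness of $A_{m,j}$ and $B_{m,j}$ and the uniform twelfth-moment bound on $\eta_0^n$.
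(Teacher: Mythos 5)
Your argument is correct: the disjointness of $A_{m,j}$ and $B_{m,j}$ gives $Z_j\in\{-1,0,1\}$ with $E|Z_j|^r=p$, the decomposition $G_m=\sum_j W_j+(\eta_0^n(m)-\rho_0^n(m))\mu$ isolates a centered sum and a scalar multiple of the centered occupation, and the refined multinomial bound (each nonvanishing factor $EW_i^{r_i}$ contributes a factor of $p$, so a tuple with $\ell\le k/2$ nonzero exponents contributes $O((Np)^\ell)$) together with the uniform twelfth-moment bound on $\eta_0^n$ and $p\le 1$ gives $E|G_m|^k\le Cp$. The paper itself defers this lemma to Lemma 4.7 of \cite{SEPP}, which runs along the same lines, so this is a correct and self-contained version of essentially the intended argument.
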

\begin{proof}
The proof is the same as in Lemma 4.7 (pages 784-785) of \cite{SEPP}.
\end{proof}
\begin{lemma}
There exists a $c>0$ such that
\begin{equation}
\label{b2}
 E[A^{12}]\leq c\left\{1+\left(\sum_mD_m\right)^6\right\}.
\end{equation}
\end{lemma}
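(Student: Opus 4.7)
The plan is to expand $E[A^{12}]$ directly by the multinomial formula, exploit independence of the $G_m$ across $m$ together with the centering $EG_m=0$ to kill any term in which some index of summation appears exactly once, and then bound the surviving factors via the moment estimate $E[|G_m|^k]\leq D_m$ supplied by the preceding lemma.

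First I would record that the $G_m$ form an independent, centered family: independence follows since $\{\eta_0^n(m)\}_m$ are independent and the random walks $X_{m,j}$ are mutually independent and independent of $\eta_0^n$, while $EG_m=0$ is explicit from the definition. Letting $r(m) = \#\{i : m_i = m\}$, independence gives
\begin{equation*}
E[A^{12}] \;=\; \sum_{m_1,\ldots,m_{12}\in\mathbb Z} E[G_{m_1}\cdots G_{m_{12}}] \;=\; \sum_{m_1,\ldots,m_{12}} \prod_{m:\,r(m)>0} E\bigl[G_m^{r(m)}\bigr],
\end{equation*}
and the vanishing of $EG_m$ forces every nonzero term to satisfy $r(m)\in\{0\}\cup\{2,\ldots,12\}$. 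In particular, the number $j$ of distinct indices contributing to such a term lies in $\{1,\ldots,6\}$, which is the essential gain.

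Next I would group the surviving tuples by their multiplicity pattern $(r_1,\ldots,r_j)$ with $r_i\geq 2$ and $\sum_i r_i = 12$. Applying $|E[G_m^{r_i}]|\leq E[|G_m|^{r_i}]\leq D_m$ from the previous lemma and summing out the positions, each such pattern contributes at most $C_\pi\sum_{m_1^\ast,\ldots,m_j^\ast\text{ distinct}} D_{m_1^\ast}\cdots D_{m_j^\ast}\leq C_\pi\bigl(\sum_m D_m\bigr)^{j}$ for a purely combinatorial multinomial coefficient $C_\pi$ that is independent of $n$. Since there are only finitely many patterns of $12$ into parts $\geq 2$, summing them yields
\begin{equation*}
E[A^{12}] \;\leq\; C\sum_{j=1}^{6}\Bigl(\sum_m D_m\Bigr)^{\!j},
\end{equation*}
and splitting into the cases $\sum_m D_m\leq 1$ and $\sum_m D_m>1$ absorbs the intermediate powers into $1$ and $(\sum_m D_m)^6$ respectively, giving the claimed estimate. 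The argument is entirely routine; the only real point is that centering caps the number of distinct indices at $6$ rather than $12$, which is exactly what allows the sixth-power bound in place of the far weaker $(\sum_m D_m)^{12}$. If there is any obstacle at all it is purely the bookkeeping of multinomial coefficients, but these are $n$-independent constants so no genuine difficulty arises.
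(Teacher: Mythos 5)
Your argument is correct and is essentially identical to the paper's: both expand $E[A^{12}]$ by the multinomial formula, use independence and $EG_m=0$ to discard any tuple in which an index appears exactly once (hence at most six distinct indices), bound each surviving factor by $E|G_m|^{r_i}\leq D_m$, and sum over distinct positions to get powers of $\sum_m D_m$ up to the sixth, finishing with the same dichotomy on whether $\sum_m D_m$ exceeds one.
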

\begin{proof}
$A=\sum_{m\in\mathbb Z }G_m$. Note that $ EG_m=0$ for all $m$ and the $G_m$'s are independent. Let ${\sum}^{(k)}$ denote the sum over all $k$-tuples of integers $r_1,r_2,\hdots,r_{k}\geq 2$ such that  $r_1+\cdots+r_k=12$.
\begin{equation}
\begin{split}
 E[A^{12}]&=\sum_{m_1,\hdots,m_{12}\in \mathbb Z} E[G_{m_1}G_{m_2}\cdots G_{m_{12}}]\\
&\leq \sum_{k=1}^6{\sum}^{(k)}\frac{12!}{r_1!\cdots r_{k}!}\sum_{m_1\neq m_2\neq\cdots m_{k}} E|G_{m_1}|^{r_1} E|G_{m_2}|^{r_2}\cdots E|G_{m_{k}}|^{r_{k}}\\
&\leq \sum_{k=1}^6{\sum}^{(k)}\frac{12!}{r_1!r_2!\cdots r_{k}!}\sum_m E|G_m|^{r_1}\sum_m E|G_m|^{r_2}\cdots\sum_m E|G_m|^{r_{k}}.
\end{split}
\end{equation}
Since $ E|G_m|^l\leq D_m$ for all $m\in\mathbb Z$ and $1\leq l\leq 12$
we get, for all $1\leq k\leq 6$,
\begin{align*}
\sum_m E|G_m|^{r_1}\sum_m E|G_m|^{r_2}\cdots \sum_m E|G_m|^{r_k}&\leq \max\{1,(\sum_mD_m)^6\}
\leq \{1+(\sum_mD_m)^6\}.
\end{align*}
Thus 
\begin{equation}
\label{b6}
 E[A^{12}]\leq c\left\{1+\left(\sum_mD_m\right)^6\right\}.
\end{equation}
\end{proof} 
We evaluate $\sum_{m\in\mathbb Z}D_m$ below. Recall that
\[\sum_{m\in\mathbb Z} D_m= C\left(\sum_{m\in\mathbb Z}P(A_{m,1})+\sum_{m\in\mathbb Z} P(B_{m,1})\right).\]
\begin{equation}
\label{b3}
\begin{split}
\sum_{m\in\mathbb Z} P(A_{m,1})&=\sum_{m\in\mathbb Z} P(X(ns)\leq [q\sqrt{n}]+[nvs]-m,X(nt)>[r\sqrt{n}]+[nvt]-m)\\
&=\sum_{m\in\mathbb Z}\sum_{l\geq m}\{ P(X(ns)=[q\sqrt{n}]+[nvs]-l)\\
&\qquad \times P(X(nt)-X(ns)>[nvt]-[nvs]+[r\sqrt{n}]-[q\sqrt{n}]+l-m)\}\\
&=\sum_{l\in\mathbb Z}\sum_{k\geq 0}\{ P(X(ns)=[q\sqrt{n}]+[nvs]-l)\\
&\qquad\times P(X(n(t-s))>[nvt]-[nvs]+[r\sqrt{n}]-[q\sqrt{n}]+k)\}\\
&\leq \sum_{k\geq 0} P(X(n(t-s))-[nv(t-s)]>[r\sqrt{n}]-[q\sqrt{n}]+k)
\end{split}
\end{equation}
Similarly,
\begin{equation}
\sum_{m\in\mathbb Z} P(B_{m,1})\leq \sum_{k<0}P(X(n(t-s))-[nv(t-s)]\leq [r\sqrt{n}]-[q\sqrt{n}]+k+1)
\end{equation}
Together,
\begin{align*}
\sum_{m\in\mathbb Z} P(A_{m,1})+\sum_{m\in\mathbb Z} P(B_{m,1})&\leq  E\bigl|X(n(t-s))-[nv(t-s)]+[q\sqrt{n}]-[r\sqrt{n}]\bigr|\\ 
&\leq  E\bigl|X(n(t-s))-[nv(t-s)]\bigr|+|r-q|\sqrt{n}+1\\
&\leq c\{\sqrt{n(t-s)}+|r-q|\sqrt{n}+1\}.\\
\end{align*} 
Consequently, (\ref{b2}) becomes 
\begin{equation} 
\label{A12} 
E[A^{12}]\leq c\{1+(n(t-s))^3+|r-q|^6n^3\}.
\end{equation} 
Putting (\ref{b1}) and (\ref{A12}) together in (\ref{b}) we get
\begin{equation*}
 E[\{n^{-1/4}(\bar{Y}_n(s,q)-\bar{Y}_n(t,r))\}^{12}]\leq c\bigl(n^{-3}+(t-s)^3+|r-q|^6\bigr)
\end{equation*}
Using $\beta<3/2$, $|r-q|\leq 2S<\infty$ and $t-s\leq T<\infty$, we can find constants $c>0$ and $\sigma>2$ such that 
\[
E[\{n^{-1/4}(\bar{Y}_n(s,q)-\bar{Y}_n(t,r))\}^{12}\ ] \leq c(\ |r-q|^\sigma+|t-s|^\sigma),\]
if $|r-q|>n^{-\beta}$ or $t-s>n^{-\beta}$.
This verifies the first tightness condition (\ref{momentbound}).

\subsection{Verifying the second tightness condition}
To verify (\ref{tightmodofcont}) for $n^{-1/4}\bar{Y}_n(t,r)$, it is sufficient to show 
\begin{lemma} \label{tight-mod}
For any $0<T,S<\infty$ and $\epsilon>0$, 
\begin{equation} 
\label{2}   
\begin{split}\lim_{n\to\infty}P\biggl\{\bigcup_{\substack{
0\leq k_1\leq [Tn^\beta]\\
[-n^\beta S]\leq k_2 \leq [n^\beta S]}}
\biggl[\sup_{\substack{
k_1n^{-\beta}\leq t\leq (k_1+1)n^{-\beta}\\
k_2n^{-\beta}\leq r\leq (k_2+1)n^{-\beta}}}
|Y_n(t,r)-Y_n(n^{-\beta}k_1,n^{-\beta}k_2)|\geq n^{1/4}\epsilon\biggr]\biggr\}=0. \end{split} 
\end{equation}   
\end{lemma}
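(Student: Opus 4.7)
The plan is to dominate the cell oscillation pathwise by a sum $T_{k_1,k_2}+E_{k_1,k_2}$ of two elementary counts whose twelfth moments are uniformly $O(1)$ in $n$ and in the cell index, and then to finish by Markov's inequality at order twelve together with a union bound over the $O(n^{2\beta})$ rectangles. The choice $\beta>5/4>1/2$ is what makes this feasible: both $n^{1/2-\beta}$ and $n^{1-\beta}$ are $<1$ for large $n$, so the spatial strip $[k_2n^{-\beta}\sqrt n,(k_2+1)n^{-\beta}\sqrt n]$ and the drift increment $[nv(k_1+1)n^{-\beta}]-[nvk_1n^{-\beta}]$ each contain at most one lattice site. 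Consequently $[r\sqrt n]$ and $[nvt]$ each take at most two values as $(t,r)$ ranges over a cell.

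\textbf{Step 1 (pathwise domination).} Write $R_{k_1,k_2}:=[k_1n^{-\beta},(k_1+1)n^{-\beta}]\times[k_2n^{-\beta},(k_2+1)n^{-\beta}]$. For $(t,r)\in R_{k_1,k_2}$ split
\[
Y_n(t,r)-Y_n(k_1n^{-\beta},k_2n^{-\beta})=\bigl[Y_n(t,k_2n^{-\beta})-Y_n(k_1n^{-\beta},k_2n^{-\beta})\bigr]+\bigl[Y_n(t,r)-Y_n(t,k_2n^{-\beta})\bigr].
\]
The first bracket is the net current through the fixed characteristic starting at $[k_2n^{-\beta}\sqrt n]$ over a sub-interval of the time window; its absolute value is bounded by the total unsigned count $T_{k_1,k_2}$ of crossings of this characteristic during the entire window $[nk_1n^{-\beta},n(k_1+1)n^{-\beta}]$, a bound independent of $(t,r)$. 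The second bracket is piecewise constant in $r$, jumping only where $[r\sqrt n]$ jumps; by direct inspection of \eqref{Y2} it is bounded in absolute value by $\eta_0(x_0)+\eta_{nt}(x_1)$ for specific sites $x_0,x_1$ which, as $(t,r)$ ranges over the cell, take at most $O(1)$ different values. Thus
\[
\sup_{(t,r)\in R_{k_1,k_2}}\bigl|Y_n(t,r)-Y_n(k_1n^{-\beta},k_2n^{-\beta})\bigr|\le T_{k_1,k_2}+E_{k_1,k_2},
\]
where $E_{k_1,k_2}$ is a sum of $O(1)$ occupation-number terms of the form $\eta_0(x)$ and $\sup_{u\in[nk_1n^{-\beta},n(k_1+1)n^{-\beta}]}\eta_u(x)$ at specific sites.

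\textbf{Step 2 (uniform twelfth-moment bounds).} By Assumption \ref{assmpn3}, the $\eta_0(x)$ terms have uniformly bounded twelfth moments. The suprema of $\eta_u(x)$ can be dominated, using independence of the walks and the large-deviation hypothesis Assumption \ref{assmpn1} for the random walk kernel, by a sum of $\eta_0$-masses on a spatial window of size $O(\sqrt{n^{1-\beta}\log n})$ around $x$, which also has bounded twelfth moment. Thus $E[E_{k_1,k_2}^{12}]\le C$ uniformly. For $T_{k_1,k_2}$, write $T_{k_1,k_2}=\sum_m\sum_{j=1}^{\eta_0^n(m)}C_{m,j}$ where $C_{m,j}$ counts the crossings by the $j$th walk starting at $m$; conditionally on $\eta_0^n$ these are independent, each bounded by a Poisson$(n^{1-\beta})$ jump count. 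Integrating the walk's transition density against the characteristic gives $E\,T_{k_1,k_2}=O(n^{1-\beta})=o(1)$, and a standard Poisson-type concentration argument together with the bounded twelfth moment of $\eta_0^n$ yields $E[T_{k_1,k_2}^{12}]\le C$ uniformly in $n$ and the cell.

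\textbf{Step 3 (union bound).} Markov's inequality and Step 2 give for each cell $P(T_{k_1,k_2}+E_{k_1,k_2}\ge n^{1/4}\epsilon)\le C/(n^{3}\epsilon^{12})$; summing over the $O(n^{2\beta})$ cells and using $2\beta<3$ yields a bound of order $n^{2\beta-3}\to 0$, which is the claim.

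\textbf{Main obstacle.} The delicate point is the uniform estimate $E[T_{k_1,k_2}^{12}]=O(1)$. A naive sum over all particles blows up, and the argument must genuinely exploit that only $O(n^{(1-\beta)/2})$ particles have non-negligible probability of crossing during a window of length $n^{1-\beta}$, each contributing $O(n^{(1-\beta)/2})$ expected crossings, before invoking concentration and the bounded twelfth moment of Assumption \ref{assmpn3} to elevate this first-moment estimate to the required twelfth-moment control.
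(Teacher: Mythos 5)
Your overall skeleton (split the cell oscillation into a time-increment across a single discretized characteristic plus a spatial increment, bound each piece, Markov at order twelve, union bound over the $O(n^{2\beta})$ cells) mirrors the paper's. But Step 2 conceals a genuine gap and the decomposition in Step 1, while not wrong, makes your life harder than it needs to be.

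The key difficulty you acknowledge at the end -- establishing $E[T_{k_1,k_2}^{12}]=O(1)$ uniformly -- is not resolved by a ``standard Poisson-type concentration argument together with bounded twelfth moments,'' and it is worth being precise about why. The natural way to invoke a Poisson bound is to dominate the crossing count by the total jump count, which is stochastically a Poisson of mean $M_n\asymp n^{1/2+\alpha}\cdot n^{1-\beta}=n^{3/2+\alpha-\beta}$, a quantity that \emph{grows} polynomially because $3/2+\alpha-\beta>\alpha>0$. The twelfth moment of such a Poisson scales like $M_n^{12}$, and Markov plus the union bound then produces $n^{2\beta+12(3/2+\alpha-\beta)-3}$, whose exponent you can check is always positive on the allowed $(\alpha,\beta)$ range. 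So the ``dominate by jump count, Markov at order twelve'' route fails; this is exactly why the paper instead uses the exponential Poisson large-deviation tail $P(\Pi(M_n)\ge\tfrac14\epsilon n^{1/4})\le e^{-C n^\alpha}$, which kills the $n^{2\beta}$ union factor. To make your moment route work one must exploit that the \emph{crossing} count is far smaller than the jump count -- only particles within roughly $\sqrt{n^{1-\beta}}$ of the characteristic contribute, and for those a decoupling (e.g.\ Cauchy--Schwarz between jump count and crossing indicator) drives the twelfth moment to $O(n^{1-\beta})$. That is provable but it is a genuinely different computation, not the one you sketch, and it is the heart of the lemma rather than a side remark.

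There are two smaller issues worth flagging. First, your $T_{k_1,k_2}$ is defined as \emph{all} crossings of the discretized characteristic, which include particles that are ``overtaken'' when $[nvs]$ increments while they sit still; those are \emph{not} bounded by jump counts. The paper handles them as a separate subevent controlled by occupation numbers at one or two explicit sites. Second, your decomposition takes the spatial increment at the variable time $t$, so your $E_{k_1,k_2}$ involves $\sup_{u\in[nt_0,n(t_0+n^{-\beta})]}\eta_u(x)$, which needs a stronger estimate than the fixed-time bound $\sup_{x,t}E[\eta_t(x)^{12}]<\infty$ of \cite{SEPP}. The paper's decomposition $|Y_n(t,r)-Y_n(t_0,r)|+|Y_n(t_0,r)-Y_n(t_0,r_0)|$ takes the spatial increment at the fixed time $t_0$ precisely to avoid this supremum, at the (mild) cost of a variable characteristic $r$ in the time increment. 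Your sketch of dominating $\sup_u\eta_u(x)$ by a sum of $\eta_0$-masses in a window of size $O(\sqrt{n^{1-\beta}\log n})$ is plausible but again is an additional lemma that would need to be proved, whereas the paper's route requires only what is already available.
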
 
\begin{proof} 
 Recall from \eqref{alpha} that $5/4+\alpha<\beta<3/2$, $\alpha>0$.  We first show that particles starting at a distance of $n^{1/2+\alpha}$ or more from the interval $([-(S+1)\sqrt{n}],[S\sqrt{n}])$ do not contribute to $Y_n(\cdot,r)$ during time interval $[0,T]$, $r\in[-S,S]$, in the $n\to\infty$ limit.
\begin{lemma}
Let 
\label{lemman1}
\begin{equation}\label{n1}\begin{split}
N_1=&\sum_{m\leq [-(S+1)\sqrt{n}]-n^{1/2+\alpha}}\sum_{j=1}^{\eta_0^n(m)}{\bf 1} \{X_{m,j}(nt)\geq-[(S+1)\sqrt{n}]+[nvt]\\
&\qquad \qquad \qquad \qquad \qquad \qquad \qquad          
\text{for some }0\leq t\leq T\}\\
&+\sum_{m\geq [S\sqrt{n}]+n^{1/2+\alpha}}\sum_{j=1}^{\eta_0^n(m)}{\bf 1} \{X_{m,j}(nt)\leq[S\sqrt{n}]+[nvt]\\
&\qquad \qquad \qquad \qquad \qquad \qquad \qquad 
\text{ for some } 0\leq t\leq T\}.
\end{split}
\end{equation}
Then $EN_1\to 0$ as $n\to \infty$.
\end{lemma}
\begin{proof}
Choose a positive integer $M$ large enough so that $1/2-\alpha(2M-1)<0$.
 The expectation of the first sum in (\ref{n1}) is bounded by 
\begin{align*}
C&\sum_{m\leq[-(S+1)\sqrt{n}]-n^{1/2+\alpha}}P(\sup_{0\leq t\leq T}(X(nt)-nvt)\geq -[(S+1)\sqrt{n}]-1-m)\\
&\leq C\sum_{l\geq n^{1/2+\alpha}}P(\sup_{0\leq t\leq T}(X(nt)-nvt)\geq l-1)\\
&\leq C\sum_{l\geq n^{1/2+\alpha}}l^{-2M}E[(X(nT)-nvT)_+^{2M}] 
\intertext{ by application of Doob's inequality to the martingale  $X(t)-vt$}
&\leq C\sum_{l\geq n^{1/2+\alpha}}l^{-2M}n^M
\intertext{ as $E[(X(nT)-nvT)_+^{2M}]$ is O($n^M$)}
&\leq Cn^{1/2-\alpha(2M-1)}\to 0 \text{ as }n\to\infty.
\end{align*}
Similarly the expectation of the other sum goes to 0 as $n\to\infty$.
\end{proof}
Let \[N_2=\sum_{\substack{m=[-(S+1)\sqrt{n}]- n^{1/2+\alpha};\\m\in\mathbb Z}}^{([S\sqrt{n}])+n^{1/2+\alpha}}\eta_0^n(m).\]
be the number of particles initially within $n^{1/2+\alpha}$ distance of the interval $(-[(S+1)\sqrt{n}],[S\sqrt{n}])$.  
Fix a constant $c$ so that \begin{equation}\label{N_2}\lim_{n\to\infty} P(N_2\geq cn^{1/2+\alpha})=0.\end{equation}
Consider the event 
\begin{equation}
\label{event}
\bigcup_{\substack{
0\leq k_1\leq [Tn^\beta]\\
[-n^\beta S]\leq k_2 \leq [n^\beta S]}}
\biggl\{\sup_{\substack{
k_1n^{-\beta}\leq t\leq (k_1+1)n^{-\beta}\\
k_2n^{-\beta}\leq r\leq (k_2+1)n^{-\beta}}}|Y_n(t,r)-Y_n(n^{-\beta}k_1,n^{-\beta}k_2)|\geq n^{1/4}\epsilon\biggr\}.
\end{equation}
If $t_0=n^{-\beta}k_1$ and $r_0=n^{-\beta}k_2$, then
$$|Y_n(t,r)-Y_n(t_0,r_0)|\leq |Y_n(t,r)-Y_n(t_0,r)|+|Y_n(t_0,r)-Y_n(t_0,r_0)|.$$\\
For fixed $k_1$ and $k_2$, the event in braces in (\ref{event}) is contained in the following union of two events:
\begin{subequations} 
\begin{align}
&\left\{\sup_{t_0\leq t\leq t_0+n^{-\beta}}\sup_{r_0\leq r\leq r_0+n^{-\beta}}|Y_n(t,r)-Y_n(t_0,r)|\geq \frac{1}{2}\epsilon n^{1/4}\right\}\label{event1}\\
&\bigcup\left\{\sup_{r_0\leq r\leq r_0+n^{-\beta}}|Y_n(t_0,r)-Y_n(t_0,r_0)|\geq \frac{1}{2}\epsilon n^{1/4}\right\}\label{event2}
\end{align}
\end{subequations}
The first event (\ref{event1}) implies that at least one of the following two things happen:\\
\begin{figure}[t]
\centering
\includegraphics[width=4in]{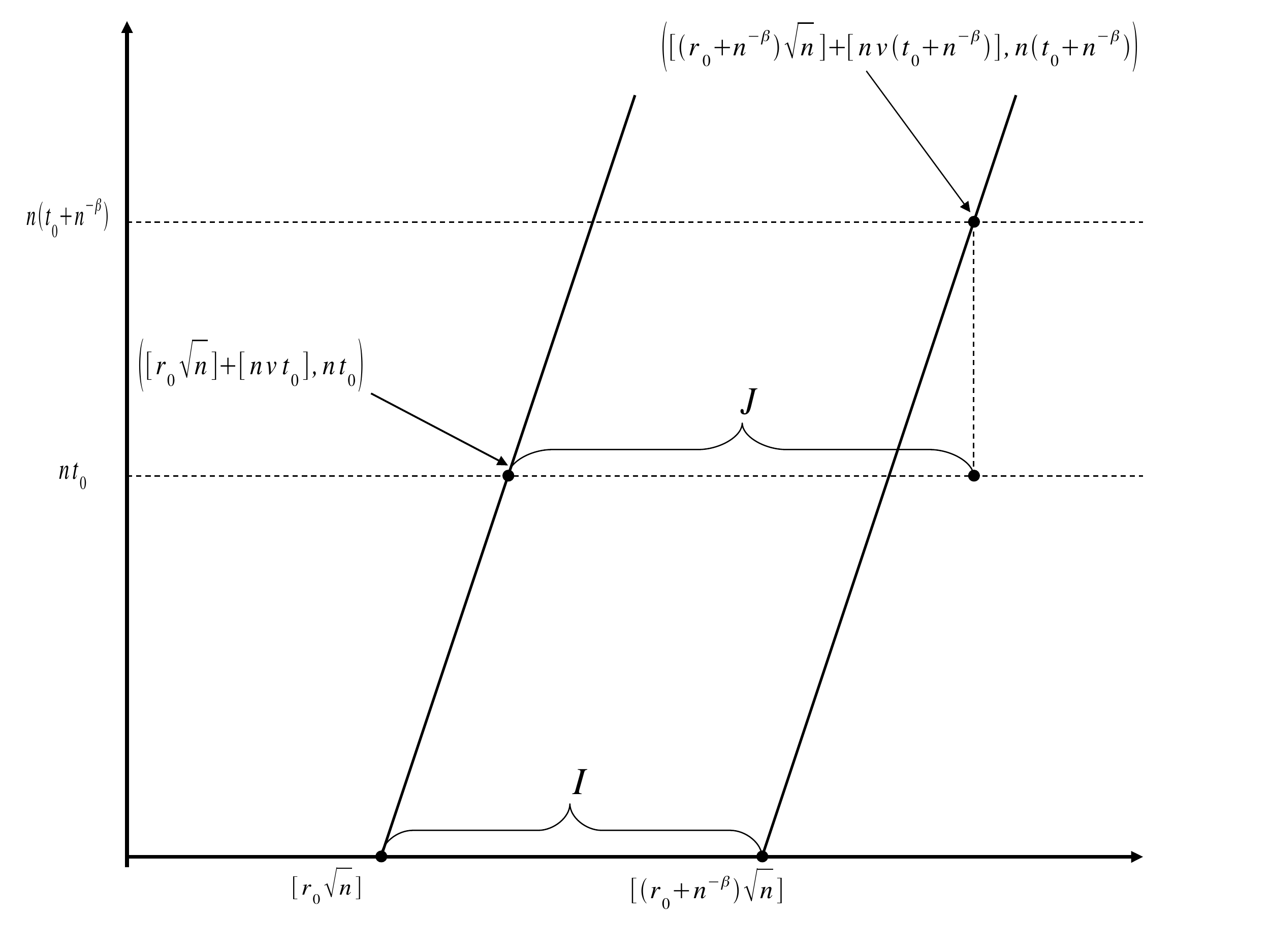}
\caption{Two characteristic lines at distance $\delta_n\sqrt{n}=n^{-\beta}\sqrt{n}$ apart.}\label{fig}
\end{figure}
\begin{enumerate}
\item
At least $\frac{1}{4}\epsilon n^{1/4}$ particles cross the discretized characteristic \[s\mapsto [r\sqrt{n}]+[nvs]\] for some $r\in [r_0,r_0+n^{-\beta}]$, during time interval $s\in [t_0,(t_0+n^{-\beta})]$ by jumping. On the event $\{N_1=0\}$, these particles must be among the $N_2$ particles initially within $n^{1/2+\alpha}$ distance of the interval $\left(-[(S+1)\sqrt{n}],[S\sqrt{n}]\right)$. Therefore, conditioned on $\{N_1=0\}$, the probability of this event is bounded by the probability that $N_2$ independent rate $1$ random walks altogether experience at least $\frac{1}{4}\epsilon n^{1/4}$ jumps in time interval of length $n^{1-\beta}$.
\item
At least $\frac{1}{4}\epsilon n^{1/4}$ particles cross the discretized characteristic \[s\mapsto [r\sqrt{n}]+[nvs]\] for some $r\in [r_0,r_0+n^{-\beta}]$, during time interval $ [t_0,(t_0+n^{-\beta})]$ by staying put while the characteristic crosses the location of these particles. These particles must lie in the interval $J$ at time $nt_0$, where \[J=\JJ ,\]  (refer to figure \ref{fig}). For large enough $n$, the distance between the endpoints of $J$ is at most $2$. So these $\frac{1}{4} \epsilon n^{1/4}$ particles must sit on at most $2$ sites, say $x_{k_1,k_2}^1$ and $x_{k_1,k_2}^2$.
\end{enumerate}
The second event (\ref{event2}) implies that at least $\frac{1}{2}\epsilon n^{1/4}$ particles either lie in the interval $\bigl[[r_0\sqrt{n}]+[nvt_0],[r\sqrt{n}]+[nvt_0]\bigr]$ at time $nt_0$, or lie in the interval $\bigl[[r_0\sqrt{n}],[r\sqrt{n}]\bigr]$ at time $0$. Since \[\left[[r_0\sqrt{n}]+[nvt_0],[r\sqrt{n}]+[nvt_0]\right]\subseteq J\] and  \[\left[[r_0\sqrt{n}],[r\sqrt{n}]\right]\subseteq I\] where $I=$\II (refer to figure \ref{fig}), this event implies that at least $\frac{1}{4}\epsilon n^{1/4}$ particles lie in interval $J$ at time $nt_0$ or at least $\frac{1}{4}\epsilon n^{1/4}$ particles lie in interval $I$ at time $0$.
For large enough $n$, the distance between the endpoints of $I$ is at most $1$, so the $\frac{1}{4}\epsilon n^{1/4}$ particles lying in interval $I$ at time $0$ must sit on a unique site $x_{0,k_2}^0$ say.

Let $\Pi(cn^{3/2+\alpha-\beta})$ denote a mean $cn^{3/2+\alpha-\beta}$ Poisson random variable that represents the total number of jumps among $cn^{1/2+\alpha}$ independent particles during a time interval of length $n^{1-\beta}$.
Then,
\begin{equation}
\begin{split}
&P\left(\text{ event in (\ref{2})}\right)\\
&\leq  P\left(N_1\geq 1\right)+ P\left(N_2\geq cn^{1/2+\alpha}\right)\\
&+\sum_{k_1=0}^{[Tn^\beta]}\sum_{k_2=[-Sn^\beta]}^{[Sn^\beta]}\biggl\{P\left(\Pi(cn^{3/2+\alpha-\beta})\geq \tfrac{1}{4}\epsilon n^{1/4}\right)+P\left(\eta_0^n(x_{0,k_2}^0)\geq \tfrac{1}{4}\epsilon n^{1/4}\right)\\
&+2\bigl\{ P\left(\eta_{n^{1-\beta}k_1}^n(x_{k_1,k_2}^1)\geq \tfrac{1}{8}\epsilon n^{1/4}\right)+ P\left(\eta_{n^{1-\beta}k_1}^n(x_{k_1,k_2}^2)\geq \tfrac{1}{8}\epsilon n^{1/4}\right)\bigr\} \biggr\}.
\label{bound}
\end{split}
\end{equation}

The probabilities $ P(N_1\geq 1)$ and $ P(N_2\geq cn^{1/2+\alpha})$ vanish as $n\to\infty$ by lemma \ref{lemman1} and \eqref{N_2}. $\Pi(cn^{3/2+\alpha-\beta})$ is stochastically bounded by a sum of $M_n=[cn^{3/2+\alpha-\beta}]+1$ i.i.d. mean 1 Poisson variables, and so a standard large deviation estimate gives 
$$ P\left(\Pi(cn^{3/2+\alpha-\beta})\geq \tfrac{1}{4}\epsilon n^{1/4}\right)\leq \exp\left\{-M_nI(\tfrac{1}{4}M_n^{-1}\epsilon n^{1/4})\right\},$$
where $I$ is the Cram\'{e}r rate function for the Poisson(1) distribution. By the choice of $\alpha$ and $\beta$, $M_n\geq n^\alpha$, while $M_n^{-1}n^{1/4}\to\infty$. Consequently, there are constants $0<C_0,C_1<\infty$ such that
 $$\sum_{k_1=0}^{[Tn^\beta]}\sum_{k_2=[-Sn^\beta]}^{[Sn^\beta]} P\left(\Pi(cn^{3/2+\alpha-\beta})\geq \tfrac{1}{4}\epsilon n^{1/4}\right)\leq C_0n^{2\beta} \exp\{-C_1n^\alpha\}\to 0.$$
By Lemma 4.10 in \cite{SEPP}, we have 
\begin{equation}
\label{occupation}
\sup_{x\in\mathbb Z,t\geq 0}E[\eta_t(x)^{12}]<\infty.
\end{equation}
So,
\begin{align*}&\sum_{k_1=0}^{[Tn^\beta]}\sum_{k_2=[-Sn^\beta]}^{[Sn^\beta]}P(\eta_{k_1n^{1-\beta}}^n(x_{k_1,k_2}^i)\geq \tfrac{1}{8}\epsilon n^{1/4})\\
&\qquad \qquad\qquad\qquad\leq (Tn^\beta+1)(2Sn^\beta+1)8^{12}\epsilon^{-12}n^{-3}\sup_{x,t,n}E[\eta_t^n(x)^{12}]\end{align*}
vanishes as $n\to\infty$ by (\ref{occupation}) and because $2\beta-3<0$ .\\
Similarly for the other probability in \eqref{bound}.

\end{proof}
 
Since the two conditions of Theorem \ref{criterion} hold for the sequence of processes $\{n^{-1/4}\bar{Y}_n\}$, we can conclude that
\begin{equation}
\label{tight} 
\lim_{\delta\downarrow 0}\limsup_n P\{w_{n^{-1/4}\bar{Y}_n}(\delta)\geq \epsilon\}=0 \text{ for all $\epsilon>0$.}
\end{equation}
\subsection{\bf Weak Convergence}
 Finally, we use the theorem about weak convergence in $D_2$ from \cite{BW}. By Theorem 2 in \cite{BW} we have $X_n$ converges weakly to $X$ in $D_2$ if,
\begin{enumerate}
\item
$(X_n(t_1,r_1),\cdots,X_n(t_N,r_N))$ converges weakly to $(X(t_1,r_1),\cdots,X(t_N,r_N))$ for all finite subsets $\{(t_i,r_i)\}\in [0,T]\times [-S,S]$, and
\item
$\lim_{\delta\to 0}\limsup_n P\{w_{X_n}(\delta)\geq \epsilon\}=0$ for all $\epsilon>0$,
where
\[w_x(\delta)=\sup_{\substack{(s,q),(t,r)\in[0,T]\times [-S,S]\\|(s,q)-(t,r)|<\delta}}|x(s,q)-x(t,r)|.\]
\end{enumerate}
This, together with the convergence of finite-dimensional distributions of $\{n^{-1/4}\bar{Y}_n(\cdot,\cdot)\}$ and (\ref{tight}) gives us weak convergence of $\{n^{-1/4}\bar{Y}_n(\cdot,\cdot)\}$ as $n\to\infty$. Since the expectations $n^{-1/4}EY_n(t,s)$ vanish uniformly over $0\leq t\leq T$, $0\leq |s|\leq S$ by (\ref{meanvanish}), we conclude that the process $\{n^{-1/4}Y_n(\cdot,\cdot)\}$ converges weakly as $n\to\infty$.

\section{\bf Proof of large deviation results}\label{sec:largedev}
\begin{proof}[\bf Proof of Theorem \ref{ld} and 
Corollary \ref{ldpcor}]
Assume that $\eta_0^n(m),m\in\mathbb Z$ are i.i.d. Fix $r\in\mathbb R$ and $t>0$.We prove that $n^{-1/2}Y_n(t,r)$ satisfies the LDP with a good rate function. 
We start with some preliminary calculations.
\begin{equation}
\begin{split}
Y_n(t,r)&=\sum_{m=-\infty}^\infty\sum_{j=1}^{\eta_0^n(m)}\bigl[\mathbf{1} \{X_{m,j}(nt)\leq [nvt]+[r\sqrt{n}]\}\mathbf{1} \{m>[r\sqrt{n}]\}\\
&-\mathbf{1} \{X_{m,j}(nt)>[nvt]+[r\sqrt{n}]\}\mathbf{1} \{m\leq [r\sqrt{n}]\}\bigr]\\
&=\sum_{m=-\infty}^\infty \sum_{j=1}^{\eta_0^n(m)} [f_{m,j}^{n,(1)}(t,r)-f_{m,j}^{n,(2)}(t,r)].
\end{split}
\end{equation}

Define
\begin{equation}
\begin{split}
M_m^n(\lambda)&
=Ee^{\lambda [f_{m,1}^{n,(1)}(t,r)-f_{m,1}^{n,(2)}(t,r)]}.
\end{split}
\end{equation}
If $m>[r\sqrt{n}]$ then
\begin{equation}\label{mgf1-1}
\begin{split}
M_m^n(\lambda)&=Ee^{\lambda f_{m,1}^{n,(1)}(t,r)}=E\left[\sum_{k\geq 0} \frac{(\lambda f_{m,1}^{n,(1)}(t,r))^k}{k!}\right]\\
&=1+(e^\lambda-1)E[f_{m,1}^{n,(1)}(t,r)]\\&=1+(e^\lambda-1)P(X(nt)\leq [nvt]+[r\sqrt{n}]-m)
\end{split}
\end{equation}
where $X(\cdot)$ represents a random walk with rates $p(x)$ starting at the origin.
Similarly, if $m\leq [r\sqrt{n}]$ then
\begin{equation}\label{mgf1-2}
\begin{split}
M_m^n(\lambda)&=Ee^{-\lambda f_{m,1}^{n,(2)}(t,r)}=E\left[\sum_{k\geq 0} \frac{(-\lambda\cdot f_{m,1}^{n,(2)}(t,r))^k}{k!}\right]\\&=1+(e^{-\lambda}-1)P(X(nt)> [nvt]+[r\sqrt{n}]-m).
\end{split}
\end{equation}
We now calculate the logarithmic moment generating function for $Y_n(t,r)$.

\begin{equation*}
\begin{split}
\log Ee^{\lambda Y_n(t,r)}&=\sum_{|m-[r\sqrt{n}]|\leq [nt\delta]}\log E\exp\left\{\lambda\sum_{j=1}^{\eta_0^n(m)}[f_{m,j}^{n,(1)}(t,r)-f_{m,j}^{n,(2)}(t,r)]\right\}\\
&+\sum_{|m-[r\sqrt{n}]|> [nt\delta]}\log E\exp\left\{\lambda\sum_{j=1}^{\eta_0^n(m)}[f_{m,j}^{n,(1)}(t,r)-f_{m,j}^{n,(2)}(t,r)]\right\}\intertext{(By large deviation bounds on $X(nt)$   \eqref{ld_bounds_rw1} and \eqref{ld_bounds_rw2}
, the second term is of $o(\sqrt{n})$)}\\
&=\sum_{|m-[r\sqrt{n}]|\leq [nt\delta]}\log E\exp\left\{\lambda\sum_{j=1}^{\eta_0^n(m)}[f_{m,j}^{n,(1)}(t,r)-f_{m,j}^{n,(2)}(t,r)]\right\}+o(\sqrt{n})
\end{split}
\end{equation*}

\begin{equation}\label{mgf2}
\begin{split}
\lim_{n\to\infty}\frac{1}{\sqrt{n}}\log Ee^{\lambda Y_n(t,r)}&=\lim_{n\to\infty}\frac{1}{\sqrt{n}}\sum_{|m-r\sqrt{n}]|\leq [nt\delta]}\log E\left[e^{\lambda\sum_{j=1}^{\eta_0^n(m)}[f_{m,j}^{n,(1)}(t,r)-f_{m,j}^{n,(2)}(t,r)]}\right]\\&=\lim_{n\to\infty}\frac{1}{\sqrt{n}}\sum_{|m-r\sqrt{n}]|\leq [nt\delta]}\log \sum_{k\geq 0}P(\eta_0^n(m)=k)(M_m^n(\lambda))^k
\intertext{(Recall that $\gamma(\alpha)=\log Ee^{\alpha\eta_0^n(\cdot)}$)}
&=\lim_{n\to\infty}\frac{1}{\sqrt{n}}\sum_{|m-r\sqrt{n}]|\leq [nt\delta]} \gamma(\log M_m^n(\lambda)).
\end{split}
\end{equation}
Using (\ref{mgf1-1}), (\ref{mgf1-2}), (\ref{mgf2}), applying Central Limit Theorem and a Riemann sum argument 
we get,
\begin{equation}
\begin{split}
\Lambda(\lambda)=&\lim_{n\to\infty}\frac{1}{\sqrt{n}}\log Ee^{\lambda Y_n(t,r)}\\&=\int_0^\infty\gamma(\log\{1+(e^\lambda-1)\Phi_{\kappa_2t}(-x)\})dx+\int_{-\infty}^0\gamma(\log\{1+(e^{-\lambda}-1)\Phi_{\kappa_2t}(x)\})dx.
\end{split}
\end{equation}

By Assumption \ref{assmpnld} we get $\Lambda(\lambda)<\infty$ for $\lambda\in\mathbb R$. It is also easy to check that $\Lambda(\lambda)$ is strictly convex and essentially smooth on $\mathbb R$.
By Theorem 2.3.6 in \cite{DZ} (G\"artner-Ellis theorem) $I(\cdot)$, the convex dual of $\Lambda(\lambda)$, is the good rate function. We now find the explicit expression for the rate function.

 If $\eta_0^n(m)\sim$ Poisson$(\rho)$, then $\gamma(\alpha)=\rho(e^\alpha-1)$.
Therefore,
\begin{align*}\Lambda(\lambda)&=\rho(e^\lambda-1)\int_0^\infty \Phi_{\kappa_2t}(-x)dx+\rho(e^{-\lambda}-1)\int_{-\infty}^0 \Phi_{\kappa_2t}(x)dx\\&=\rho\sqrt{\frac{\kappa_2t}{2\pi}}(e^\lambda+e^{-\lambda}-2).
\end{align*}
The convex dual of this is:
\begin{align*}I(x)&=\sup_{\lambda\in\mathbb R}\{x\lambda-\Lambda(\lambda)\}\\&
=x\log\biggl(\frac{x\sqrt{\pi}}{\rho\sqrt{2\kappa_2t}}+\sqrt{1+\frac{x^2\pi}{2\rho^2\kappa_2t}}\biggr) -\rho\sqrt{\frac{2\kappa_2t}{\pi}}\biggl(\sqrt{1+\frac{x^2\pi}{2\rho^2\kappa_2t}}-1\biggr)\text{ for }x\in\mathbb R.
\end{align*}
This proves \eqref{rate_fn}.

To prove Theorem \ref{ld} we first check that $I(\cdot)$ given by \eqref{RateFn} is the convex dual of $\Lambda(\cdot)$ and then outline how to get the expression in \eqref{RateFn}.
Elementary but tedious computations give
\begin{equation}\label{I'}I'(\Lambda'(\lambda))=\lambda.\end{equation}
It can be shown that $\Lambda'$ is continuous and strictly increasing. Therefore, $I'$ is defined on the whole real line  by \eqref{I'}. 
By Theorem 26.5 in \cite{Rock} (page 258), $I$ must be $\Lambda^*$ plus a constant. But $I(0)=0=\Lambda^*(0)$, so $I=\Lambda^*$. This proves that $I$ given by \eqref{RateFn} is the convex dual of $\Lambda$ and hence the rate function.  Since $\Lambda'$ is continuous and strictly increasing, $I'$ must be strictly increasing from \eqref{I'} and hence $I$ must be strictly convex. This completes the proof 
of Theorem \ref{ld}.  Corollary \ref{ldpcor} comes as a special
case of \eqref{RateFn}. 

For the reader's benefit, here is an indication of how 
the expression in \eqref{RateFn} is derived 
non-rigorously.
We first approximate the integral in $\Lambda(\cdot)$ by a Riemann sum. 
\[\Lambda(\lambda)=\lim_{\delta\to 0}\sum_k \tilde{\Lambda}_k^\delta(\lambda)\]
where 
\begin{equation}
\label{summands}
 \tilde{\Lambda}_k^\delta(\lambda) := \begin{cases}\delta\gamma\bigl( Br_{1-\P(k\delta)}(\lambda)\bigr)&\text{ for }k>0\\
 \delta\gamma\bigl( Br_{\P(k\delta)}(-\lambda)\bigr)&\text{ for }k\leq 0.
 \end{cases}
 \end{equation}
The function $Br_p(\lambda)$ denotes the log moment generating function for Bernoulli random variables as defined in section 2.3. 
Observe that the summands \eqref{summands} are a composition of two functions. 
Using the definition of convex dual, it is easy to prove the identity
\begin{equation}
\label{composition}
(f\circ g)^*(x)=f'(g(\lambda))g^*(g'(\lambda))+f^*(f'(g(\lambda)))\end{equation}
where $\lambda$ is such that \begin{equation}\label{constraint}x=f'(g(\lambda))\cdot g'(\lambda).\end{equation}
We use this identity to get the convex dual of the summands \eqref{summands}. 

For small $\delta$
\[\Lambda(\lambda)\approx \sum_{|k|\leq [1/\delta]} \tilde{\Lambda}_k^\delta(\lambda).\]
 The convex dual of the sum is then given as an infimal convolution. 
\begin{equation}\label{inf_conv}
\begin{split}\Lambda^*(x)&\approx \left(\sum_{|k|\leq [1/\delta]}\tilde{\Lambda}_k^\delta\right)^*(x)=\inf_{\sum_{|k|\leq [1/\delta]}x_k=x}\sum_{|k|\leq [1/\delta]}\left(\tilde{\Lambda}_k^\delta\right)^*(x_k)\intertext{now using \eqref{composition} we get}
&=\inf_{\sum\limits_{|k|\leq [1/\delta]}x_k=x}\Biggl\{\delta\sum_{k=1}^{[1/\delta]}\Bigl[\gamma'\left(Br_{1-\P(k\delta)}(\lambda_k)\right)Br_{1-\P(k\delta)}^*(Br_{1-\P(k\delta)}'(\lambda_k))\\
&\qquad\qquad+\gamma^*\left(\gamma'(Br_{1-\P(k\delta)}(\lambda_k))\right)\Bigr]\\
&\quad +\delta \sum_{k=-[1/\delta]}^0\Bigl[\gamma'\left(Br_{\P(k\delta)}(-\lambda_k)\right)Br_{\P(k\delta)}^*(Br_{\P(k\delta)}'(-\lambda_k))\\
&\qquad\qquad+\gamma^*\left(\gamma'(Br_{\P(k\delta)}(-\lambda_k))\right)\Bigr]\Biggr\}
\end{split}
\end{equation}
where \[x_k= \begin{cases}\delta\gamma'\left(Br_{1-\P(k\delta)}(\lambda_k)\right)Br'_{1-\P(k\delta)}(\lambda_k)&\text{ for }k>0\\
-\delta\gamma'\left(Br_{\P(k\delta)}(-\lambda_k)\right)Br'_{\P(k\delta)}(-\lambda_k)&\text{ for }k\leq 0.\end{cases}\]

Note that
\begin{gather}
Br_{1-\P(y)}(\lambda)=Z_\lambda(y) \text{ for }y>0\ ,\ 
Br_{\P(y)}(-\lambda)=Z_\lambda(y) \text{ for }y\leq 0\\
Br'_{1-\P(y)}(\lambda)=1-F_\lambda(y)\text{ for } y>0\ ,\ 
Br'_{\P(y)}(-\lambda)=F_\lambda(y) \text{ for }y\leq 0.
\end{gather}

Taking the limit as $\delta\to 0$  of \eqref{inf_conv}  we get the constrained variational problem
\begin{equation*}
\begin{split}
\Lambda^*(x)&=\inf_{{\displaystyle\{\lambda: \int\limits_{-\infty}^\infty \tfrac{\partial}{\partial \lambda(y)}\left[\gamma\left(Z_{\lambda(y)}(y)\right)\right] dy=x\}}}\Biggl[ \int_{-\infty}^\infty\gamma^*\left\{\gamma'(Z_{\lambda(y)}(y))\right\}dy\\
&\qquad+\int_{-\infty}^\infty \gamma'(Z_{\lambda(y)}(y))Br^*_{\P(y)}(F_{\lambda(y)}(y))dy\Biggr].
\end{split}
\end{equation*}
Solving the above variational problem using standard functional analysis techniques we get $\lambda(y)\equiv \alpha(x)$ minimizes the above functional and $\alpha(x)$ satisfies the constraint \eqref{current_x}. 
\end{proof}

\begin{proof}[\bf Proof of Theorem \ref{LDP_process}]
Let $Y_n(t)=Y_n(t,0)$. Under the assumption $\eta_0^n(m)\sim Poisson(\rho)$ we can show that $\{n^{-1/2}Y_n(\cdot)\}$ satisfies the large deviation principle in $D_{\mathbb R}[0,\infty)$. 

Fix $k$ time points $0\leq t_1<t_2<\cdots <t_k$.
Define the $k$-vectors with $0,1$ entries by \[\vec{F}_{m,j}=\bigl({\bf 1}\{X_{m,j}(nt_1)\leq [nvt_1]\},\hdots ,{\bf 1}\{X_{m,j}(nt_k)\leq [nvt_k]\}\bigr)\] and
\[\vec{G}_{m,j}=\bigl({\bf 1}\{X_{m,j}(nt_1)> [nvt_1]\},\hdots ,{\bf 1}\{X_{m,j}(nt_k)> [nvt_k]\}\bigr).\]
Then \[\bigl(Y_n(t_1),\hdots,Y_n(t_k)\bigr)=\sum_{m=1}^\infty\sum_{j=1}^{\eta_0^n(m)}\vec{F}_{m,j}-\sum_{m=-\infty}^0\sum_{j=1}^{\eta_0^n(m)}\vec{G}_{m,j}.\]
Let $\vec{u}:=(u^{(1)},\hdots,u^{(k)})\in\{0,1\}^k$ be a $k$-vector with $\vec{u}\neq \vec{0}$.
Define \[N^1_n(\vec{u}):=\sum_{m=1}^\infty\sum_{j=1}^{\eta_0^n(m)}{\bf 1}\{\vec{F}_{m,j}=\vec{u}\}\] and
\[N_n^2(\vec{u}):=\sum_{m=-\infty}^0\sum_{j=1}^{\eta_0^n(m)}{\bf 1}\{\vec{G}_{m,j}=\vec{u}\}.\]
If $\eta_0^n(\cdot)$ are i.i.d. Poisson($\rho$) random variables, then $N_n^1(\vec{u})$ is a Poisson random variable with rate
  \[\sum_{m=1}^\infty\rho P(\bigcap_{i=1}^kC^{u^{(i)}}_{m,i})<\infty,\] where 
$C^1_{m,i}=\{X(nt_i)\leq [nvt_i]-m\}$,
$C^0_{m,i}=\{X(nt_i)> [nvt_i]-m\}$,
and  $N_n^2(\vec{u})$ is a Possion random variable with rate 
 \[\sum_{m=-\infty}^0\rho P(\bigcap_{i=1}^kD^{u^{(i)}}_{m,i})<\infty,\] where 
$D^1_{m,i}=\{X(nt_i)> [nvt_i]-m\}$,
$D^0_{m,i}=\{X(nt_i)\leq [nvt_i]-m\}$. We use the bounds \eqref{ld_bounds_rw1} and \eqref{ld_bounds_rw2} on the large deviations of random walks to justify the rates being finite.

We can write \[(Y_n(t_1),\hdots, Y_n(t_k))=\sum_{\vec{u}\neq\vec{0}}(N_n^1(\vec{u})-N_n^2(\vec{u}))\vec{u}.\] Let $\vec{u}_j=(u_j^{(1)},\hdots ,u_j^{(k)}), j=1,\hdots , 2^k-1$ denote the $\{0,1\}$-valued $k$-vectors, excluding the zero vector. By the contraction principle (Theorem 4.2.1 in \cite{DZ}) we can conclude that $(Y_n(t_1),\hdots, Y_n(t_k))$ satisfies the LDP with good rate function given by \[I_{t_1,\hdots,t_k}(\vec{x}):=\inf J(y_1,\hdots,y_{2^k-1},z_1,\hdots,z_{2^k-1})\]
for any $\vec{x}=(x_1,\hdots,x_k)$. The $\inf$ here is taken over the set $\{(y_1,\hdots,y_{2^k-1},z_1,\hdots,z_{2^k-1}):\vec{x}=\sum_{\vec{u}_j\neq\vec{0}}(y_j-z_j)\vec{u}_j\}$. $J(y_1,\hdots,y_{2^k-1},z_1,\hdots,z_{2^k-1})$ is the good rate function for the sequence of vectors $\{(N_n^1(\vec{u_1}),\hdots,N_n^1(\vec{u}_{2^k-1}),N_n^2(\vec{u_1}),\hdots,N_n^2(\vec{u}_{2^k-1}))\}_n$ of $2^{k+1}-2$ independent Poisson random variables.
Define for $x\geq0$, \[C_{x,j}^1=\{B(\kappa_2t_j)\leq -x\}, \ 
 C_{x,j}^0=\{B(\kappa_2t_j)> -x\},\] 
\[D_{x,j}^1=\{B(\kappa_2t_j)> x\}\text{ and }
D_{x,j}^0=\{B(\kappa_2t_j)\leq x\},\] where $B(\cdot)$ is standard Brownian motion.
\[J(y_1,\hdots,y_{2^k-1},z_1,\hdots,z_{2^k-1})=\sum_{i=1}^{2^k-1}\bigl[y_i\log \frac{y_i}{\alpha_i}+z_i\log\frac{z_i}{\beta_i}-\alpha_i(\frac{y_i}{\alpha_i}-1)-\beta_i(\frac{z_i}{\beta_i}-1)\bigr]\]
where \begin{align*}\alpha_i=\lim_{n\to\infty}\frac{1}{\sqrt{n}}EN_n^1(\vec{u_i})=\rho\int_0^\infty P\left(\bigcap_{j=1}^kC_{x,j}^{{u_i}^{(j)}}\right)dx\end{align*}
and 
 \begin{align*}\beta_i=\lim_{n\to\infty}\frac{1}{\sqrt{n}}EN_n^2(\vec{u_i})=\rho\int_0^\infty P\left(\bigcap_{j=1}^kD_{x,j}^{{u_i}^{(j)}}\right)dx.\end{align*}
 
 We now apply Theorem 4.30 in \cite{KF} to $\{n^{-1/2}Y_n(\cdot)\}$. This gives us the large deviation principle for $\{n^{-1/2}Y_n(\cdot)\}$ in $D_{\mathbb R}[0,\infty)$ with good rate function \begin{equation}\label{rate_process}I(x)=\sup_{\{t_i\}} I_{t_1,\hdots,t_m}(x(t_1),\hdots,x(t_m)).\end{equation}
\end{proof}

{\bf Acknowledgements}
 This paper is part of my Ph.D. thesis. I would like to thank my advisor Prof. Timo Sepp\"{a}l\"{a}inen for suggesting the problem and for his help in guiding me in my research.\\

\bibliographystyle{plain}
\bibliography{bib-2}
\end{document}